\documentclass{amsart}
\usepackage{amsfonts,amssymb,amscd,amsmath,enumerate,verbatim,calc}
\usepackage[all]{xy}

\newcommand{\CM}{Cohen-Macaulay}

\newcommand{\wrt}{with respect to}

\newcommand{\Ic}{\mathcal{I} }

\newcommand{\m}{\mathfrak{m} }

\newcommand{\R}{\mathcal{R} }
\newcommand{\Z}{\mathbb{Z} }
\newcommand{\D}{\mathcal{D} }
\newcommand{\K}{\mathcal{K} }

\newcommand{\rt}{\rightarrow}

\newcommand{\ov}{\overline}

\newcommand{\Tc}{\mathcal{T} }

\newcommand{\Xb}{\mathbf{X}}
\newcommand{\Zb}{\mathbf{Z}}
\newcommand{\Yb}{\mathbf{Y}}

\newcommand{\Pb}{\mathbf{P}}
\newcommand{\Qb}{\mathbf{Q}}

\newcommand{\cone}{\operatorname{cone}}

\newcommand{\depth}{\operatorname{depth}}
\newcommand{\projdim}{\operatorname{projdim}}
\newcommand{\injdim}{\operatorname{injdim}}
\newcommand{\proj}{\operatorname{proj}}

\newcommand{\CMS}{\operatorname{\underline{CM}}}
\newcommand{\thick}{\operatorname{thick}}

\newcommand{\Syz}{\operatorname{Syz}}
\newcommand{\md}{\operatorname{mod}}

\newcommand{\Hom}{\operatorname{Hom}}
\newcommand{\Ext}{\operatorname{Ext}}
\newcommand{\Tor}{\operatorname{Tor}}

\theoremstyle{plain}

\newtheorem{theorem}{Theorem}[section]
\newtheorem{corollary}[theorem]{Corollary}
\newtheorem{lemma}[theorem]{Lemma}
\newtheorem{proposition}[theorem]{Proposition}

\theoremstyle{definition}

\newtheorem{s}[theorem]{}

\theoremstyle{remark}

\begin{document}

\title[Dominant rings]{Derived functors and Hilbert polynomials over dominant Cohen-Macaulay  rings }
\author{Tony~J.~Puthenpurakal}
\date{\today}
\address{Department of Mathematics, Indian Institute of Technology Bombay, Powai, Mumbai 400 076, India}

\email{tputhen@gmail.com}
 \begin{abstract}
Let $(A,\mathfrak{m})$ be a Cohen-Macaulay local ring of dimension $d$, residue field $k$ and let $I$ be an $\mathfrak{m}$-primary ideal. Assume $k$  is infinite. Let $N$ be a perfect $A$-module of dimension $t \geq 1$. Let $M$ be a MCM $A$-module.  The function $n \rt \ell(\Tor^A_j(M, N/I^{n+1}N))$  is of polynomial type and let  $r_{I, N}^j(M)$  be its degree. In general we have
$r_{I, N}^j(M) \leq r_{I,N}^2(\Syz^d_A(k))$  for $j \geq d + 2$. If $A$ is also dominant in the sense of Takahashi \cite{T2} and $M$ is non-free then we show $\limsup_{j \rt \infty} r_{I, N}^j(M) = r_{I,N}^2(\Syz^d_A(k))$. We prove an analogous result when $V$ is a Cohen-Macaulay module of finite injective dimension and the polynomial type function $n \rt \ell(\Ext_A^j(M, V/I^{n+1}V))$
\end{abstract}
 \maketitle
\section{introduction}
Let $(A,\m)$ be a \CM \ local ring  of dimension $d \geq 1$ and let $I$ be an $\m$-primary ideal. If $D$ is an $A$-module of finite length then $\ell(D)$ denotes its length. Let $M$ be a maximal \CM \ (= MCM) $A$-module and let $N$ be a perfect $A$-module of dimension $t \geq 1$. The function $n \rt  \ell(\Tor^A_j(M, N/I^{n+1}N))$ is of polynomial type say of degree
$r_{I, N}^j(M)$, see \cite[Corollary 4]{Theo}.  In general we have
$r_{I, N}^j(M) \leq r_{I,N}^2(\Syz^d_A(k))$  for $j \geq d + 2$, see \ref{bound}. Here $\Syz^d_A(k)$ denotes the $d^{th}$-syzygy of $k$, see \ref{bound}.
In \cite[Theorem 18]{P1} we proved that
$r_{\m, A}^1(M)  = d-1$ for any non-free MCM $A$-module. Set the degree of the zero polynomial to be $-1$. It was also shown that if $I$ is a parameter ideal then $r_{I, A}^1(M)  = -1$, see \cite[Remark 20]{P1}.
In general it is a difficult question to determine $r_{I, N}^j(M)$ and the answer is known only for a few classes of ideals and modules, see \cite[3.5]{KT} for  some examples.
The fact that $r_{\m, A}^1(M) = d-1$  for non-free MCM's has  important consequences in the study of associated graded modules (\wrt \ $\m$) of MCM $A$-modules, see \cite{P3}.

Recently we proved that the properties of the singularity category of $A$ have a significant bearing on the properties of $r_{I, N}^j(M)$. In \cite{P4} we proved that if $A$ is a hypersurface ring then there exists an integer $r_I \geq -1$ depending only on $A$ and $I$ such that $r_{I, A}^1(M)  = r_I$ for all non-free MCM $A$-modules $M$. The property we used was that if $\CMS(A)$ is the stable category of $A$ then any non-zero thick subcategory of $\CMS(A)$ contains $\CMS^0(A)$,  the thick subcategory consisting of all MCM modules which are free on the punctured spectrum of $A$,
see \cite[6.6]{T}.

Let $R$ be a  Noetherian local ring and let $\D^b(R)$ be the bounded derived category of finitely generated $R$-modules. Let $\D_{sg}(R)$ be the singularity category of $R$, i.e., the Verdier quotient of $\D^b(R)$ by the thick subcategory  generated by $R$. When $R$ is Gorenstein then $\D_{sg}(R)$ is isomorphic to the stable category of MCM $R$-modules, \cite[4.4.1]{Bu}. Recently Takahashi defined a rich class of local rings called dominant rings, see \cite{T2}. A local ring $R$ is called dominant if the thick closure of any non-zero element in $\D_{sg}(R)$ contains the residue field of $R$.
We prove:
\begin{theorem}
\label{main-tor} Let $(A,\m)$ be a dominant \CM \ local ring of dimension
$d$ and let $I$ be an $\m$-primary ideal of $A$. Assume the residue field of $A$ is infinite. Let $N$ be a perfect $A$-module of dimension $t \geq 1$.  Let $M$ be a non-free MCM $A$-module. Then we have
 $\limsup_{j \rt \infty} r_{I, N}^j(M) = r_{I,N}^2(\Syz^d_A(k))$.
\end{theorem}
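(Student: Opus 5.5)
The upper bound $\limsup_{j} r^j_{I,N}(M) \le r^2_{I,N}(\Syz^d_A(k))$ is already the general inequality of \ref{bound}, so everything rests on the reverse inequality. Set $r = r^2_{I,N}(\Syz^d_A(k))$.

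The plan is to show that for infinitely many $j$ we have $r^j_{I,N}(M) \ge r$. Since $r^j$ takes values in the finite set $\{-1,\dots,t-1\}$, this is the same as showing that $r^j_{I,N}(M) \ge r$ cannot fail for all $j \gg 0$. I will work with the class
\[
\mathcal{C}_{s} = \{ X \in \D^b(A) : \ell(\Tor^A_j(X, N/I^{n+1}N)) \text{ is of polynomial type of degree } \le s \text{ for all } j \gg 0\}.
\]
Here $\Tor$ is hyperTor, which makes sense because $N/I^{n+1}N$ has finite length, and "degree $\le s$" means that the function is eventually bounded by a polynomial of degree $s$.

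The key steps, in order, are as follows.

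First, I check that $\mathcal{C}_s$ is a thick subcategory of $\D^b(A)$ containing $A$. It contains $A$ because $\Tor_j$ vanishes for $j>0$. It is closed under shifts, because a shift only reindexes $j$. It is closed under direct summands, because lengths are additive. For a triangle $X\to Y\to Z\to$, the long exact sequence gives
\[
\ell(\Tor_j(Y,-)) \le \ell(\Tor_j(X,-)) + \ell(\Tor_j(Z,-)),
\]
and bounds of degree $\le s$ are preserved under sums. Here I would use "eventually bounded by a polynomial of degree $s$" rather than exact degree, to make the closure work cleanly. So $\mathcal{C}_s$ descends to a thick subcategory of $\D_{sg}(A)$.

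Second, suppose for contradiction that $r^j_{I,N}(M) \le r-1$ for all $j\gg0$. Then $M \in \mathcal{C}_{r-1}$, and $M \neq 0$ in $\D_{sg}(A)$ since $M$ is a non-free MCM module over a CM ring and hence has infinite projective dimension. By dominance, $k$ lies in the thick closure of $M$ in $\D_{sg}(A)$, hence $k \in \mathcal{C}_{r-1}$. Then $\Syz^d_A(k)\in\mathcal{C}_{r-1}$ as well, since syzygies differ from $k$ by shift in $\D_{sg}$. This gives $r^j_{I,N}(\Syz^d_A(k)) \le r-1$ for $j\gg0$.

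Third, and this is the main obstacle, I must contradict this with $r^2_{I,N}(\Syz^d_A(k)) = r$. I need that for $L=\Syz^d_A(k)$ the degrees $r^j_{I,N}(L)$ are at least $r$ for infinitely many $j$; monotonicity, or eventual constancy of degree equal to $r$, would suffice. My first attempt would revisit the proof of \ref{bound}. That inequality presumably comes from $\Tor^A_j(M,-)\cong \Tor^A_{j-d}(\Syz^d M,-)$ together with the fact that, $k$ being dominated by everything, $\Tor_{i}(\Syz^d k, -)$ controls $\Tor_i$ for any module. Applying the same bound to $M' = \Syz^{j-2}_A(k)$, whose $d$-th syzygy is a summand-related syzygy of $k$, should give $r^2(\Syz^d k)\le r^{j}(\Syz^d k)$ up to index shift. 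Concretely, I would try to show directly that
\[
\Tor^A_{j}(k, N/I^{n+1}N) \supseteq \Tor^A_{2+d}(k,N/I^{n+1}N)^{\oplus}
\]
in length growth, using the structure of $\Tor^A(k,-)$ as a module over the Koszul/acyclic closure, or via a minimal resolution argument in which the multiplication by $\Tor$ of $k$ is injective. Such a statement gives $r^j(\Syz^d k)\ge r$ for all $j\ge 2$, which yields the contradiction and completes the proof.
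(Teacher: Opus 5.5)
Your first two steps are sound and are the paper's argument in contrapositive form. Degree bounds on $n\mapsto\ell(\Tor^A_j(-,N/I^{n+1}N))$ for $j\gg0$ cut out a thick subcategory of $\D^b(A)$ containing $\thick(A)$. By dominance these bounds pass from $M$ to $k$, and hence to $\Syz^d_A(k)$; the paper expresses this as $\alpha_{I,N}(\Qb)\le\alpha_{I,N}(\Pb)$ via Theorem \ref{mom}.

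\textbf{The gap is your third step}, and that is where the real content lies. You need $\limsup_j r^j_{I,N}(\Syz^d_A(k))\ge r^2_{I,N}(\Syz^d_A(k))$, i.e.\ that $r^m_{I,N}(k)$ does not eventually drop below $r^{d+2}_{I,N}(k)$. Without this, $k\in\mathcal{C}_{r-1}$ contradicts nothing. Your suggested routes do not give it:
\begin{itemize}
\item Applying \ref{bound} to syzygies of $k$ only yields $r^m_{I,N}(k)\le r^{d+2}_{I,N}(k)$ for $m\ge d+2$, which is the wrong direction.
\item Injectivity of multiplication on $\Tor^A(k,-)$ by elements of $\Tor^A(k,k)$ (or of an acyclic closure) is not available over an arbitrary local ring. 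You give no argument for it, and it would imply comparisons between Betti numbers of $N/I^{n+1}N$ that are not known in general.
\end{itemize}

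\textbf{The missing ingredient is Theorem \ref{equi-residue}(1)}: $r^j_{I,N}(k)=r^{d+2}_{I,N}(k)$ for all $j\ge d+2$. Since $\Tor^A_j(\Syz^d_A(k),-)\cong\Tor^A_{j+d}(k,-)$ for $j\ge1$, it gives $r^j_{I,N}(\Syz^d_A(k))=r^2_{I,N}(\Syz^d_A(k))$ for all $j\ge2$. With it your argument is complete. Its proof is by induction on $t=\dim N$ and uses a property special to $k$ that your sketch never invokes.

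For $t=1$: Corollary \ref{tor-ext-deg} and $\Tor^A_j(k,N)=0$ for $j>d$ put every degree with $j\ge d+1$ in $\{-1,0\}$. The degree is $0$ exactly when $\Tor^A_j(k,N/I^{n+1}N)\neq0$ for $n\gg0$. This condition does not depend on $j$, because $\Tor^A_j(k,L)\neq0$ for every $j\le\projdim L$. Moreover, by \ref{asymp-ko}, $\projdim N/I^{n+1}N$ is either finite (hence $\le d$) for all $n\gg0$ or infinite for all $n\gg0$.

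For $t\ge2$: if $\projdim N/I^{n+1}N<\infty$ for $n\gg0$, all degrees are $-1$ by \ref{fin-dim}. Otherwise choose $x\in I$ superficial for $N$ with $xt$ filter-regular on the finitely generated $\R$-modules $\Tor^A_j(k,L^I(N))$ for $d+1\le j\le j_0$ (Proposition \ref{bella}, Lemma \ref{mod-filter-sup}). This gives exact sequences
\[
0\rt\Tor^A_j(k,N/I^nN)\rt\Tor^A_j(k,N/I^{n+1}N)\rt\Tor^A_j(k,\ov{N}/I^{n+1}\ov{N})\rt0 \quad (n\gg0).
\]
Hence $r^j_{I,N}(k)=r^j_{I,\ov{N}}(k)+1$, where $\ov{N}=N/xN$ is perfect of dimension $t-1$, and induction finishes.
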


\s We also have a dual result to Theorem \ref{main-tor}. Let $V$ be a \CM \ $A$-module of finite injective dimension and of dimension $t$. The function \\  $n \rt  \ell(\Ext_A^j(M, V/I^{n+1}V))$ is of polynomial type say of degree
$s_{I, V}^j(M)$, see \cite[Corollary 4]{Theo}.  In general we have
$s_{I, V}^j(M) \leq s_{I,V}^1(\Syz^d_A(k))$  for $j \geq d + 1$, see \ref{bound}. We prove
\begin{theorem}
\label{main-ext} Let $(A,\m)$ be a dominant \CM \ local ring of dimension
$d$ and let $I$ be an $\m$-primary ideal of $A$.  Assume the residue field of $A$ is infinite. Let $V$ be a \CM \ $A$-module of dimension $t \geq 1$ and of finite injective dimension. Let $M$ be a non-free MCM $A$-module.  Then we have
 $\limsup_{j \rt \infty} s_{I, V}^j(M) = s_{I,V}^1(\Syz^d_A(k))$.
\end{theorem}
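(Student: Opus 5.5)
Let $\Ic$ be the full subcategory of $\CMS(A)$, or more precisely of $\D_{sg}(A)$, consisting of MCM modules $X$ with $\limsup_{j\rt\infty} s_{I,V}^j(X) \leq c-1$, where $c = s_{I,V}^1(\Syz^d_A(k))$. Arguing by contradiction, suppose $\limsup_j s_{I,V}^j(M) < c$ for a non-free MCM module $M$. Since the degrees are integers, $s^j_{I,V}(M) \leq c-1$ for all $j \gg 0$, so $M \in \Ic$. The upper bound $\limsup \le c$ comes from \ref{bound}, so a contradiction gives the equality. The plan is to show that $\Ic$ is a thick subcategory containing $M$ and to deduce $k \in \Ic$. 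Then $\Syz^d_A(k)\in\Ic$, which contradicts the definition of $c$ via the dimension shift $s^{1}_{I,V}(\Syz^{d}_A(k))$ versus $s^{j}_{I,V}(\Syz^{j-1+d}_A(k))$.

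\emph{Step 1 (dimension shifting).} For an MCM module $X$ and the syzygy sequence $0\rt \Syz X \rt F \rt X \rt 0$, I will show $\Ext^{j+1}_A(X, V/I^{n+1}V) \cong \Ext^j_A(\Syz X, V/I^{n+1}V)$ for $j\ge 1$. So $s^{j+1}(X) = s^j(\Syz X)$, and the $\limsup$ is invariant under syzygies and cosyzygies, i.e.\ under the shift in $\CMS$, which for a Gorenstein ring is $\D_{sg}$. In the general \CM\ case I would instead work with complexes in $\D^b(A)$ and truncations. Here $V$ has finite injective dimension, so $\Ext^j_A(-,V/I^{n+1}V)$ is controlled by the depth of $V/I^{n+1}V$. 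The essential point is that for perfect complexes $P$, $\Ext^j_A(P, V/I^{n+1}V)=0$ for $j\gg0$ uniformly in $n$, since $P$ has bounded amplitude. Hence the functions $X\mapsto \Ext^j(X, V/I^{n+1}V)$ for $j \gg 0$ factor through $\D_{sg}(A)$. Direct summands are clear, since lengths add.

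\emph{Step 2 (exact triangles).} For a triangle $X\rt Y\rt Z\rt$ in $\D_{sg}(A)$, realised as a short exact sequence of MCM modules after taking high syzygies, the long exact Ext sequence gives
$\ell(\Ext^j(Y,-_n)) \le \ell(\Ext^j(X,-_n)) + \ell(\Ext^j(Z,-_n))$. The degree of a sum of polynomial-type functions is the max of the degrees, so $s^j(Y) \le \max(s^j(X), s^j(Z))$. Taking $\limsup$, $\Ic$ is closed under extensions, and with Step 1 it is thick. The main obstacle is making this rigorous in $\D_{sg}(A)$ for a non-Gorenstein ring. Objects there need not be modules. I would represent every object, up to shift, by a high syzygy of a module, using that $A$ is \CM, and use Step 1 to move freely between shifts.

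\emph{Step 3 (conclusion).} Dominance gives $k\in\thick(M)\subseteq\Ic$. Then $\Syz^{d}_A(k)$, being isomorphic to a shift of $k$ in $\D_{sg}$, also lies in $\Ic$. Thus $s^{j}(\Syz^d_A(k)) \le c-1$ for $j\gg0$. By Step 1, $s^j(\Syz^d_A(k)) = s^1(\Syz^{d+j-1}_A(k))$. To reach a contradiction I need $s^1(\Syz^{m}_A(k)) = c$ for all $m\ge d$. I would prove this from \ref{bound}, which gives $s^1(\Syz^{m}k)\le c$ for all $m \geq d$. For the reverse inequality, $\Syz^d_A(k)$ is a direct summand of $\Syz^{d+2}$ or some higher syzygy when $k$ is infinite, by a result of the type used to prove \ref{bound}, via a superficial-element argument. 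This last point is where the infinite residue field enters, and I expect it to be delicate. Theorem \ref{main-tor} is handled identically, with $\Tor_j(-,N/I^{n+1}N)$ and perfect $N$.
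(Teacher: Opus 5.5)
\textbf{Steps 1 and 2.} These are in substance the paper's Section 4 (Proposition \ref{triangle} and Theorem \ref{mom}). The $\limsup$ is unchanged under isomorphism in $\D_{sg}(A)$, because a perfect complex has Hom-cohomology into $V/I^{n+1}V$ vanishing in high degrees uniformly in $n$. It is also subadditive on triangles. The paper sidesteps your ``main obstacle'' by defining $\beta_{I,V}$ for every complex in $\K^{-,b}(\proj A)$ and lifting triangles from the Verdier quotient, so it never needs module representatives. Your high-syzygy route can be made to work, but it is only sketched. Also, $\injdim V<\infty$ plays no role in this part of the argument.

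\textbf{The gap is in Step 3.} Dominance gives you $\limsup_j s^j_{I,V}(k)\le \limsup_j s^j_{I,V}(M)$. To finish, you need $s^{j}_{I,V}(k)=s^{d+1}_{I,V}(k)=s^1_{I,V}(\Syz^d_A(k))=c$ for infinitely many $j$, and this is exactly Theorem \ref{equi-residue}(2). Your proposed justification is that $\Syz^d_A(k)$ is a direct summand of some higher syzygy of $k$ when $k$ is infinite. This is not an available result and is false in general. For instance, over $k[x,y]/(x^2,y^2)$ the syzygy functor is an autoequivalence of the stable category, so $\Syz^m(k)$ is indecomposable with $m+1$ generators.

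\textbf{What is needed instead.} The missing idea is an induction on $\dim V$, not a structural statement about syzygies.
\begin{enumerate}
\item Base case $\dim V=1$. The degrees $s^j_{I,V}(k)$ for $j\ge d+1$ lie in $\{-1,0\}$. They equal $-1$ if and only if $\injdim V/I^{n+1}V<\infty$ for $n\gg0$, a condition independent of $j$ by \ref{asymp-ko} and \ref{fin-dim}.
\item Inductive step $\dim V\ge 2$. Proposition \ref{bella}, which is where $\injdim V<\infty$ is used, makes $\Ext^j_A(k,L^I(V))$ finitely generated over $\R$ for $j>d$. The infinite residue field lets you choose $y\in I$ superficial for $V$ with $yt$ filter-regular on these modules. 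This gives exact sequences $0\rt \Ext^j_A(k,V/I^nV)\rt\Ext^j_A(k,V/I^{n+1}V)\rt\Ext^j_A(k,\ov{V}/I^{n+1}\ov{V})\rt 0$ for $n\gg0$. Hence $s^j_{I,V}(k)=s^j_{I,\ov{V}}(k)+1$, and induction applies.
\end{enumerate}

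With this in hand the contradiction framing is unnecessary: you get $c=\limsup_j s^j_{I,V}(k)\le\limsup_j s^j_{I,V}(M)\le c$ directly.
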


\s A complete intersection is dominant if and only if it is a hypersurface ring, see \cite[1.1]{T2}. There exist complete intersection rings which are not hypersurfaces for which Theorems \ref{main-tor} and Theorem \ref{main-ext} do not hold, see \cite[15.6]{P5}.

We now describe in brief the contents of this paper. In section two we discuss some preliminaries  that we need. In section three we give our bounds on $r_{I, N}^j(M)$  when $j \geq d + 2$ and 
on $s_{I, V}^j(M)$  when $j \geq d + 1$.
 In section four we study polynomials associated to complexes in the bounded derived category and the singularity category. In section five we give a proof of Theorems \ref{main-tor} and Theorem \ref{main-ext}.
\section{Preliminaries}
We discuss a few preliminary results that we need.

\s \label{rachel} Let $A$ be a  Noetherian local ring  of dimension $d \geq 1$ and let $I$ be an ideal of $A$. Let $\R = A[It]$ be the Rees algebra of $I$.
We need the following well-known and easily proved result.
\begin{proposition}\label{rach-prop} (with setup as in \ref{rachel}). Let $M$ be a finitely generated $A$-module. Let $E$ be a graded $\R$-module. Then
\begin{enumerate}[\rm (1)]
  \item The $A$-modules $\Tor^A_i(M, E)$ and $\Ext_A^i(M, E)$ are graded $\R$-modules for all $i \geq 0$.
\item If $E = \bigoplus_{n \in \Z} E_n$ then $\Tor^A_i(M, E) = \bigoplus_{n \in \Z} \Tor^A_i(M, E_n)$ and $\Ext_A^i(M, E) = \bigoplus_{n \in \Z}\Ext^i_A(M, E_n)$ for all $i \geq 0$.
  \item If $E$ is a finitely generated $\R$-module then $\Tor^A_i(M, E)$ and $\Ext_A^i(M, E)$ are finitely generated  $\R$-modules for all $i \geq 0$.
  \item Let $0 \rt E^\prime \rt E \rt E^{\prime \prime} \rt 0$ be an exact sequence of $\R(I)$-modules. Then it induces an  exact sequence of graded $\R$-modules
  \begin{enumerate}[\rm (a)]
    \item  \begin{align*}
           \cdots    &\rt\Tor^A_i(M, E^\prime) \rt \Tor^A_i(M, E) \rt \Tor^A_i(M, E^{\prime, \prime}) \\
             &\rt \Tor^A_{i-1}(M, E^\prime) \rt \cdots
           \end{align*}
\item  \begin{align*}
           \cdots    &\rt\Ext_A^i(M, E^\prime) \rt \Ext_A^i(M, E) \rt \Ext_A^i(M, E^{\prime, \prime}) \\
             &\rt \Ext_A^{i+1}(M, E^\prime) \rt \cdots
           \end{align*}
  \end{enumerate}
\end{enumerate}\qed
\end{proposition}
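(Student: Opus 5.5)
The plan is to compute everything from a single free resolution of $M$, using the degree-wise nature of all the constructions involved. Fix a free resolution $\mathbb{F} \colon \cdots \rt F_1 \rt F_0 \rt 0$ of $M$ by finitely generated free $A$-modules. Write $E = \bigoplus_{n\in\Z} E_n$, with $\R$ acting by maps $E_n \rt E_{n+1}$ (multiplication by $at$ for $a \in I$). For a finitely generated free module $F = A^r$, the module $F\otimes_A E \cong E^r$ is naturally a graded $\R$-module, and $\Hom_A(F,E)\cong E^r$ is one as well. The differentials of $\mathbb{F}$ are $A$-linear and therefore commute with the $\R$-action. Hence $\mathbb{F}\otimes_A E$ and $\Hom_A(\mathbb{F},E)$ are complexes of graded $\R$-modules whose differentials are homogeneous of degree $0$. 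Their homology modules, which compute $\Tor^A_i(M,E)$ and $\Ext^i_A(M,E)$, are therefore graded $\R$-modules. This gives (1); independence of the chosen resolution follows from the usual comparison maps, which are also $\R$-linear.

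For (2), observe that tensor product commutes with direct sums, so $F_i\otimes_A E = \bigoplus_n F_i\otimes_A E_n$. Because each $F_i$ is finitely generated, $\Hom_A(F_i,E)=\bigoplus_n \Hom_A(F_i,E_n)$. This is the one point where finite generation matters: $\Hom$ out of an infinitely generated free module would produce a product rather than a sum. The differentials respect this decomposition, and homology commutes with direct sums, so $\Tor^A_i(M,E)_n = \Tor^A_i(M,E_n)$ and similarly for $\Ext$.

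For (3), if $E$ is a finitely generated $\R$-module then each term $E^{r_i}$ of either complex is a finitely generated $\R$-module. Since $\R$ is Noetherian (it is a quotient of a polynomial ring over $A$), the cycles are finitely generated, and hence so are the homology modules.

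For (4), a short exact sequence $0\rt E'\rt E\rt E''\rt 0$ of graded $\R$-modules, tensored with or mapped from the free modules $F_i$, remains exact. It therefore yields a short exact sequence of complexes of graded $\R$-modules, with maps of degree $0$:
\[
0\rt \mathbb{F}\otimes E'\rt \mathbb{F}\otimes E\rt \mathbb{F}\otimes E''\rt 0,
\]
and the analogous sequence for $\Hom_A(\mathbb{F},-)$. The associated long exact homology sequences give (a) and (b). It remains to check that the connecting homomorphisms are $\R$-linear and homogeneous of degree $0$, which follows from the snake-lemma construction because all maps involved are $\R$-linear of degree $0$.

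There is no real obstacle here. The only point requiring care is the finite generation of the $F_i$ in the $\Hom$ part of (2) and in (3).
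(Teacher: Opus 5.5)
Your argument is correct. The paper gives no proof of its own; it calls the result ``well-known and easily proved'' and ends it with \qed. Your computation via a resolution of $M$ by finitely generated free modules is the standard verification that is being omitted, and it correctly flags the two places where finiteness matters: finite generation of the $F_i$ for the $\Hom$ decomposition in (2), and Noetherianity of $\R$ in (3).
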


\s \label{rach-l} Let $U$ be a finitely generated $A$-module. Consider $L^I(U) = \bigoplus_{n \geq 0}U/I^{n+1}U$. We note that the inclusion $U[It] \rt U[t]$ is an inclusion of $\R$-modules. Its co-kernel is $L^I(U)(-1)$ and so $L^I(U)$ has a structure of $\R$-module.
Note $L^I(U)$ is \emph{not} a finitely generated $\R$-module if $I \neq A$.
We note that $\Tor^A_i(M, L^I(U)) = \bigoplus_{n \geq 0}\Tor^A_i(M, U/I^{n+1}U$ and $\Ext^i_A(M, L^I(U)) = \bigoplus_{n \geq 0}\Ext^i_A(M, U/I^{n+1}U)$ for all $i \geq 0$.

\begin{proposition}
\label{bella}(with hypotheses as in \ref{rachel}) Further assume $A$ is \\ Cohen-Macaulay and $I$ is a proper ideal of $A$. Let $U, V$ be finitely generated $A$-modules with $\projdim U$ and $\injdim V$ finite. Then $\Tor^A_i(M, L^I(U))$ and \\ $\Ext^i_A(M, L^I(V))$ are finitely generated $\R$-modules for all $i > d$.
\end{proposition}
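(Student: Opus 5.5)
Note that in the statement $M$ is an arbitrary finitely generated $A$-module, since no hypothesis is placed on it. The plan is to use the exact sequence from \ref{rach-l},
\[
0 \rt U[It] \rt U[t] \rt L^I(U)(-1) \rt 0,
\]
and to apply Proposition \ref{rach-prop}(4). Here $U[It] = \bigoplus_{n\geq 0} I^nU$ is a finitely generated $\R$-module, being the Rees module of $U$. The module $U[t] = \bigoplus_{n \geq 0} U$ is not finitely generated over $\R$, but it is very simple: its graded pieces are all $U$, and $\Tor^A_i(M, U[t]) = \bigoplus_{n\geq 0}\Tor^A_i(M,U)$ by Proposition \ref{rach-prop}(2). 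The Tor case then reduces to the vanishing of $\Tor^A_i(M,U)$ for $i$ large, and the Ext case to the vanishing of $\Ext^i_A(M,V)$ for $i$ large.

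For the Tor case, the long exact sequence of Proposition \ref{rach-prop}(4a) reads
\[
\Tor^A_i(M,U[t]) \rt \Tor^A_i(M,L^I(U))(-1) \rt \Tor^A_{i-1}(M,U[It]) \rt \Tor^A_{i-1}(M,U[t]).
\]
Since $A$ is Cohen--Macaulay and $\projdim U < \infty$, the Auslander--Buchsbaum formula gives $\projdim U = d - \depth U \leq d$. Hence $\Tor^A_i(M,U) = 0$ for all $i > d$, and so $\Tor^A_i(M,U[t]) = 0$ for $i > d$. For $i > d+1$ both outer terms vanish, so
\[
\Tor^A_i(M,L^I(U))(-1) \cong \Tor^A_{i-1}(M,U[It]).
\]
The right-hand side is finitely generated by Proposition \ref{rach-prop}(3). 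For $i = d+1$, the first outer term still vanishes, so $\Tor^A_{d+1}(M,L^I(U))(-1)$ injects into $\Tor^A_d(M,U[It])$. That module is finitely generated, and $\R$ is Noetherian, so $\Tor^A_{d+1}(M,L^I(U))$ is finitely generated as well. This settles all $i>d$ for Tor.

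For the Ext case, apply Proposition \ref{rach-prop}(4b) to the sequence with $V$ in place of $U$:
\[
\Ext^i_A(M,V[t]) \rt \Ext^i_A(M,L^I(V))(-1) \rt \Ext^{i+1}_A(M,V[It]) \rt \Ext^{i+1}_A(M,V[t]).
\]
The key input is Ischebeck's result: over a Cohen--Macaulay local ring, if $\injdim V < \infty$ then $\injdim V = \depth A = d$. Hence $\Ext^i_A(M,V) = 0$ for $i > d$ and every finitely generated $M$. By Proposition \ref{rach-prop}(2), $\Ext^i_A(M,V[t]) = \bigoplus_n \Ext^i_A(M,V) = 0$ for $i>d$. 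Therefore, for $i > d$, the module $\Ext^i_A(M,L^I(V))(-1)$ injects into, and is in fact isomorphic to, $\Ext^{i+1}_A(M,V[It])$. The latter is finitely generated by Proposition \ref{rach-prop}(3), which completes the Ext case.

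The only points needing care are the following.
\begin{enumerate}[\rm (a)]
\item The identification of Ext of a direct sum. This is valid for $V[t]$ because $M$ is finitely generated, so $\Ext^i_A(M,-)$ commutes with arbitrary direct sums; Proposition \ref{rach-prop}(2) records exactly this.
\item The grading shift $(-1)$, which does not affect finite generation.
\item The use of $\injdim V = d$. This is the main input, and I would cite it rather than reprove it.
\end{enumerate}
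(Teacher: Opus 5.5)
Your proposal is correct and is the paper's own argument: the same exact sequence $0 \rt U[It] \rt U[t] \rt L^I(U)(-1) \rt 0$, the vanishing of $\Tor^A_i(M,U[t])$ and $\Ext^i_A(M,V[t])$ for $i>d$, and the resulting isomorphisms for $i \geq d+2$ (Tor) and $i \geq d+1$ (Ext), with the submodule argument at $i=d+1$ for Tor. Your only additions are the inputs the paper leaves tacit, namely Auslander--Buchsbaum for $\projdim U \leq d$ and the equality $\injdim V = \depth A$; the latter is usually credited to Bass, Ischebeck's theorem being the finer statement about the vanishing of $\Ext^i_A(M,V)$, and neither fact needs $A$ to be Cohen--Macaulay.
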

\begin{proof}
  We have an exact sequence of $\R$-modules $0 \rt U[It] \rt U[t] \rt L^I(U)(-1) \rt 0$. We note that $\Tor^A_i(M, U[t]) = \bigoplus_{n \geq 0}\Tor^A_i(M, U)  = 0$ for $i > d$.
  We apply $M \otimes -$ to this exact sequence.
  It follows that
  $\Tor^A_i(M, L^I(U)(-1)) \cong \Tor^A_{i -1}(M, U[It])$ for $i \geq d +2$ as $\R$-modules and  $\Tor^A_{d+1}(M, L^I(U)(-1)) $ is an $\R$-submodule of $\Tor^A_{d}(M, U[It])$. So
  $\Tor^A_i(M, L^I(U))$  are finitely generated $\R$-modules for all $i > d$, see \ref{rach-prop}..

  We have an exact sequence of $\R$-modules $0 \rt V[It] \rt V[t] \rt L^I(V)(-1) \rt 0$. We note that $\Ext_A^i(M, V[t]) = \bigoplus_{n \geq 0}\Ext_A^i(M, V)  = 0$ for $i > d$.
  We apply $\Hom_A(M,  -)$ to this exact sequence.
  It follows that
  $\Ext_A^i(M, L^I(V)(-1)) \cong \Ext_A^{i +1}(M, V[It])$ for $i \geq d +1$.
  So $\Ext_A^i(M, L^I(V))$
    are finitely generated $\R$-modules for all $i > d$, see \ref{rach-prop}..
\end{proof}

\s\label{lila} Now assume that $A$ has infinite residue field and that $I$ is $\m$-primary.
\begin{enumerate}
  \item If $U$ is a finitely generated  graded $\R$-module then there exists $xt \in \R_1$ which is $U$ filter regular, i.e., $(0 \colon_U xt)_n = 0$ for $n \gg 0$. In fact there exists
a non-empty Zariski open set $W$ of $I/\m I$ such that if $x\in I$ be such that the image of $x$ in $ I /\m I$ is in $W$ then $xt$ is $U$-filter regular.
  \item Let $N$ be a finitely generated $A$-module. An element $x \in I$ is said to be $I$-superficial \wrt \ $N$ if there exists $c$  and $n_0$ such that $(I^{n+1}N \colon x)\cap I^cN = I^n N$ for all $n \geq n_0$. Superficial elements exist as the residue field of $A$ is infinite. In fact there exists
a non-empty Zariski open set $W$ of $I/\m I$ such that if $x\in I$ be such that the image of $x$ in $ I /\m I$ is in $W$ then $x$ is $I$-superficial \wrt \ $N$.
\item Assume $\depth N > 0$. Then if  $x \in I$ is $I$-superficial \wrt \ $N$ then $x$ is $N$-regular. Furthermore $(I^{n+1}N \colon x) = I^n N $ for all $n \gg 0$.
\item If $E_i$ for $i = 1, \ldots, c$ are finitely generated graded $\R$-modules and if $N$ is a finitely generated $A$-module then there exists $x \in I$ such that  $x \in I$ is $I$-superficial \wrt \ $N$ and $xt \in \R_1$ which is $E_i $ filter regular  for $i = 1, \ldots, c$. This follows from (1) and (2).
  \end{enumerate}

  \s \label{asymp-ko} Let $N$ be a finitely generated $A$-module. Then either $\projdim N/I^{n+1}N < \infty $ for all $n \gg 0$ or $\projdim N/I^{n+1}N  = \infty $ for all $n \gg 0$. This follows from the fact that the function  $n \rt \ell(\Tor^A_{d+1}(k, N/I^{n+1}N))$ is of polynomial type, see \cite[Theorem 2]{Ko}. Similarly $\injdim N/I^{n+1}N < \infty $ for all $n \gg 0$ or $\injdim N/I^{n+1}N  = \infty $ for all $n \gg 0$. This follows from the fact that the function  $n \rt \ell(\Ext_A^{d+1}(k, N/I^{n+1}N))$ is of polynomial type, see \cite[Theorem 2]{Ko}.

\s We  recall some results of Theodorescu.  Let $F_N(I) = \bigoplus_{n \geq 0}I^nN/\m I^nN$  the fiber-cone of $I$ \wrt \ $N$ and let  $\lambda_N(I) = \dim F_N(I)$. We note that $\lambda_N(I) \leq \dim N$ and if $I$ is $\m$-primary  then $\lambda_N(I) = \dim N$.
The first result is
\begin{theorem}\label{complex}(\cite[Proposition 3]{Theo}):
Let $(A, \m)$ be  a Noetherian local ring. Let $\mathcal{C} \colon F_2 \rt F_1 \rt F_0$ be a complex with $F_1, F_0$ finitely generated $A$-modules. Let
$N$ be another finitely generated $A$-module and let $I$ be an ideal of $A$.
Assume $H_1(\mathcal{C} \otimes N/I^nN)$ has finite length for all $n \geq 1$. Then
\begin{enumerate}[\rm (1)]
  \item The function $n \rt \ell(H_1(\mathcal{C} \otimes N/I^n N))$ of polynomial type, say of degree $u_N$.
  \item We have $u_N \leq \max \{ \dim H_1(\mathcal{C} \otimes N), \lambda_N(I) -1 \}$.
  \item If $\dim H_1(\mathcal{C} \otimes N)\geq \lambda_N(I) $ then the inequality in $\rm{(2)}$ becomes an equality.
\end{enumerate}
\end{theorem}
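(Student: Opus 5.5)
The plan is to realize the lengths $\ell(H_1(\mathcal{C}\otimes N/I^nN))$ as the Hilbert function of a finitely generated graded module over the Rees algebra $\R=A[It]$ (or a quotient of it), and then to split that module into two pieces whose growth is controlled by $\dim H_1(\mathcal{C}\otimes N)$ and by $\lambda_N(I)$ respectively.

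\textbf{Step 1: describe $H_1$ inside a fixed module.} Set $T=F_1\otimes N$ and $W=F_0\otimes N$. Write $\partial\colon T\rt W$ for the induced map, and let $Z=\ker\partial$ and $B=\operatorname{im}(F_2\otimes N\rt T)$, so that $H:=H_1(\mathcal{C}\otimes N)=Z/B$. Since $F_i\otimes N/I^nN=(F_i\otimes N)/I^n(F_i\otimes N)$, a direct computation gives
\[
H_1(\mathcal{C}\otimes N/I^nN)\cong K_n/(B+I^nT),\qquad K_n=\partial^{-1}(I^nW).
\]
By Artin--Rees applied to $\operatorname{im}\partial\subseteq W$, there is a $c$ with $I^nW\cap\operatorname{im}\partial=I^{n-c}(I^cW\cap\operatorname{im}\partial)$ for $n\geq c$. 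Hence $K_n=Z+I^{n-c}K_c$ for $n\ge c$.

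\textbf{Step 2: graded module structure and part (1).} Consider
\[
\mathcal{H}=\bigoplus_{n\geq c}(Z+I^{n-c}K_c)/(B+I^nT).
\]
Multiplication by $at$ with $a\in I$ is well defined, so $\mathcal{H}$ is a graded $\R$-module. It is finitely generated, since it is generated by the components in degrees $c,\dots,2c$ together with the images of generators of $K_c$. Each component has finite length by hypothesis, so some power $\m^e$ kills each component in degrees $c,\ldots,2c$, and therefore kills all of $\mathcal{H}$. Thus $\mathcal{H}$ is a finitely generated graded module over $\R/\m^e\R$, which is a standard graded algebra over the Artinian ring $A/\m^e$. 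Its Hilbert function is then of polynomial type, which proves (1).

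\textbf{Step 3: the degree bound (2).} Filter $\mathcal{H}$ by the graded submodule
\[
\mathcal{H}'=\bigoplus_n (Z+B+I^nT)/(B+I^nT)\cong\bigoplus_n Z/(B+Z\cap I^nT).
\]
By Artin--Rees for $Z\subseteq T$, the filtration $Z\cap I^nT$ is $I$-stable on $Z$. Hence the components of $\mathcal{H}'$ are quotients of $H$ by an $I$-good filtration, and their lengths have degree $\le\dim H$. The quotient $\mathcal{H}/\mathcal{H}'$ has components
\[
(Z+I^{n-c}K_c)/(Z+I^nT),
\]
which are images of $I^{n-c}K_c$. Using the $\m^e$-annihilation, this quotient is a finitely generated module over $\R/\m^e\R$ that is controlled by the fiber-cone type modules $\bigoplus I^nN/\m I^nN$. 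Its dimension is therefore at most $\lambda_N(I)$, so its Hilbert function has degree $\le\lambda_N(I)-1$. Combining the two pieces gives (2).

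\textbf{Step 4: equality (3).} If $\dim H\ge\lambda_N(I)$, then $H$ contributes exactly degree $\dim H$. The lower bound comes from comparing with $\ell(H/I^nH)$, a Hilbert--Samuel type function of degree $\dim H$ on the relevant finite-length localization, while the other piece has strictly smaller degree and cannot cancel it.

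\textbf{Main obstacle.} I expect the delicate part to be Step 3 and Step 4: the dimension estimate on $\mathcal{H}/\mathcal{H}'$ in terms of $\lambda_N(I)$, and making the ``degree exactly $\dim H$'' comparison rigorous when $I$ is not $\m$-primary. Here the finite-length hypothesis must be used to pass to $A/\m^e$. My first attempt at the estimate would be to dominate $\mathcal{H}/\mathcal{H}'$ by a finite direct sum of shifted copies of $F_N(I)\otimes A/\m^e$, built from a filtration by powers of $\m$.
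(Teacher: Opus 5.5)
The paper gives no proof of this statement (it is quoted from \cite[Proposition 3]{Theo}), so your argument has to stand on its own. Step 1 is correct, and the splitting in Step 3 is the right one. Step 2, however, is false as stated: $\mathcal{H}$ is in general neither a finitely generated $\R$-module nor killed by a fixed power of $\m$. Use $I^nT\subseteq I^{n-c}K_c$ and Artin--Rees for $Z\subseteq T$. One then finds $\mathcal{H}_n/\R_1\mathcal{H}_{n-1}\cong Z/(B+IZ)=H/IH$ for $n\gg0$, which is nonzero whenever $H\neq0$. So $\mathcal{H}$ needs new generators in every large degree.

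Concretely, take $F_2=F_0=0$, $F_1=N=A$, $I=\m$, $\dim A>0$. Then $\mathcal{H}=\bigoplus_{n\ge c}A/\m^n$, which is essentially the module $L^{\m}(A)$ that the paper notes in \ref{rach-l} is not finitely generated, and no $\m^e$ annihilates it. The culprit is your submodule $\mathcal{H}'$. Its components are $H/H_n$, with $H_n$ the image of $Z\cap I^nT$ in $H$, and they grow like $H/I^nH$. Hence Step 2 does not prove (1), and the ``$\m^e$-annihilation'' you invoke in Step 3 is not available for $\mathcal{H}$.

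The repair is to reverse the order. Work with the componentwise exact sequences $0\to\mathcal{H}'_n\to\mathcal{H}_n\to\mathcal{Q}_n\to0$, where $\mathcal{Q}=\mathcal{H}/\mathcal{H}'$, and treat the two pieces separately.

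\emph{The piece $\mathcal{H}'$.} Since $(H_n)$ is an $I$-good filtration with $H_0=H$, you have $I^nH\subseteq H_n\subseteq I^{n-k}H$ for $n\ge k$. Finiteness of $\ell(H/H_n)$ therefore forces $\ell(H/IH)<\infty$. So $\ell(\mathcal{H}'_n)$ is the Hilbert function of a good filtration on $H$, of degree exactly $\dim H$; no localization is needed in Step 4.

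\emph{The piece $\mathcal{Q}$.} Your Step 2 reasoning is valid here. $\mathcal{Q}_n$ is the image of $I^{n-c}K_c$, so $\mathcal{Q}$ is generated by the image of $K_c$ in degree $c$, hence is finitely generated and killed by some $\m^e$.

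\emph{The dimension bound.} Replace the vague domination by copies of $F_N(I)\otimes A/\m^e$ with an annihilator argument. The map $\partial$ identifies $\mathcal{Q}_n$ with $(I^nW\cap U)/I^nU$, where $U=\partial(T)$. So $\mathcal{Q}$ is an $\R$-submodule of $\bigoplus_n I^nW/I^nU$, which is a quotient of $F_0\otimes_A\bigoplus_n I^nN$. Thus $\operatorname{ann}_{\R}(\bigoplus_n I^nN)+\m^e\R$ kills $\mathcal{Q}$. This gives $\operatorname{Supp}\mathcal{Q}\subseteq\operatorname{Supp}F_N(I)$, so $n\mapsto\ell(\mathcal{Q}_n)$ has degree $\dim\mathcal{Q}-1\le\lambda_N(I)-1$.

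\emph{Conclusion.} $\ell(\mathcal{H}_n)=\ell(\mathcal{H}'_n)+\ell(\mathcal{Q}_n)$ is a sum of two eventually non-negative functions of polynomial type, so no cancellation can occur and $u_N=\max\{\dim H,\dim\mathcal{Q}-1\}$. This yields (1), (2) and (3) at once, and even $u_N\ge\dim H$ without the hypothesis of (3).
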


As a corollary Theodorescu obtains:
\begin{corollary}
\label{tor-ext-deg}(\cite[Corollary 4]{Theo}:) Let $(A, \m)$ be  a Noetherian local ring. Let $M, N$ be finitely generated $A$-modules and let $I$ be an ideal in $A$. Fix $i \geq 0$.
\begin{enumerate}[\rm (I)]
  \item Assume $\ell(\Tor^A_i(M, N/I^n N))$ is finite for all $n \geq 1$. Then
  \begin{enumerate}[\rm (i)]
    \item The function $n \rt \ell(\Tor^A_i(M, N/I^n N))$ is of polynomial type say of degree $t_I(M, N)$.
    \item $t_I(M, N) \leq \max \{ \dim \Tor^A_i(M, N), \lambda_N(I) - 1\}$.
    \item If $\dim \Tor^A_i(M, N) \geq \lambda_N(I)$ then the inequality in $\rm{(ii)}$ becomes an equality.
  \end{enumerate}
  \item Assume $\ell(\Ext^i_A(M, N/I^n N))$ is finite for all $n \geq 1$. Then
  \begin{enumerate}[\rm (i)]
    \item The function $n \rt \ell(\Ext^i_A(M, N/I^n N))$ is of polynomial type say of degree $e_I(M, N)$.
    \item $e_I(M, N) \leq \max \{ \dim \Ext^i_A(M, N), \lambda_N(I) - 1\}$.
    \item If $\dim \Ext^i_A(M, N) \geq \lambda_N(I)$ then the inequality in $\rm{(ii)}$ becomes an equality.
  \end{enumerate}
\end{enumerate}
\end{corollary}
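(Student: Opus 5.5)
Corollary~\ref{tor-ext-deg} is quoted from Theodorescu. It is a formal consequence of Theorem~\ref{complex}, and I would derive it by reducing to a three-term complex. For part (I), take a free resolution $\mathbf{F} \colon \cdots \rt F_{i+1} \rt F_i \rt F_{i-1} \rt \cdots$ of $M$ by finitely generated free modules. Consider the complex $\mathcal{C} \colon F_{i+1} \rt F_i \rt F_{i-1}$, reindexed so that $F_i$ sits in homological degree $1$. For $i = 0$ put $F_{-1} = 0$. Since $F_{i+1}, F_i, F_{i-1}$ are free, tensoring with any module $X$ gives
\[
H_1(\mathcal{C} \otimes X) = \ker(F_i\otimes X \rt F_{i-1} \otimes X)/\operatorname{im}(F_{i+1}\otimes X) = \Tor^A_i(M, X).
\]
Applying this with $X = N/I^nN$, the hypothesis that $\ell(\Tor^A_i(M,N/I^nN)) < \infty$ for all $n \ge 1$ is exactly the hypothesis of Theorem~\ref{complex}. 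Parts (i), (ii), (iii) then follow directly, with $u_N = t_I(M,N)$ and $H_1(\mathcal{C} \otimes N) = \Tor^A_i(M,N)$.

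For part (II), I would use the dual complex. Applying $\Hom_A(-, X)$ to the free resolution and using $\Hom_A(F, X) \cong F^* \otimes X$ for finite free $F$ (where $F^* = \Hom_A(F,A)$) gives
\[
\Hom_A(F_{i-1},X) \rt \Hom_A(F_i,X) \rt \Hom_A(F_{i+1},X),
\]
which is naturally the complex $F_{i-1}^* \rt F_i^* \rt F_{i+1}^*$ tensored with $X$. Set $\mathcal{C}' \colon F_{i-1}^* \rt F_i^* \rt F_{i+1}^*$, placed in homological degrees $2,1,0$. Then $H_1(\mathcal{C}' \otimes X) = \Ext^i_A(M,X)$, naturally in $X$. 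The key point is that the identification $\Hom_A(F,X) \cong F^*\otimes X$ is natural in $X$ and compatible with the differentials. This holds because the differentials of $\mathcal{C}'$ are just the transposed matrices. Taking $X = N/I^nN$ and $X = N$, Theorem~\ref{complex} applied to $\mathcal{C}'$ yields (i)--(iii) with $e_I(M,N)$ in place of $u_N$ and $\dim \Ext^i_A(M,N)$ in place of $\dim H_1(\mathcal{C}'\otimes N)$.

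Theorem~\ref{complex} only requires $F_1$ and $F_0$ to be finitely generated, and here all three terms are finitely generated free modules. So the only points that need checking are the homological identifications above, in particular the naturality of the duality isomorphism in part (II) and the edge case $i = 0$. I expect no genuine obstacle.
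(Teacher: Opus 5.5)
Your derivation is correct and is the intended one. The paper gives no proof of its own: it quotes this from Theodorescu as a direct consequence of Theorem~\ref{complex}. Applying that theorem to the piece $F_{i+1}\rt F_i\rt F_{i-1}$ of a free resolution of $M$ for (I), and to its dual $F_{i-1}^*\rt F_i^*\rt F_{i+1}^*$ via $\Hom_A(F,X)\cong \Hom_A(F,A)\otimes X$ for (II), is exactly how it follows. The paper uses this same identification in \ref{p-bound}(2).
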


\s \emph{Triangulated categories:}\\
We use \cite{N} for notation on triangulated categories. However we will assume that if $\mathcal{C}$ is a triangulated category then $\Hom_\mathcal{C}(X, Y)$ is a set for any objects $X, Y$ of $\mathcal{C}$.

\s\label{const-thick} Let $\Tc$ be a triangulated category with shift functor $\sum$. Let $\Ic_1, \Ic_2$ be two subcategories of $\Tc$. We denote by $\Ic_1 * \Ic_2$ the full subcategory  of $\Tc$ consisting of objects $M$  such that there is a  triangle  $M_1 \rt M \rt M_2 \rt \sum M_1$ with $M_i \in \Ic_i$.
Let $\Ic$ be a full subcategory of $\Tc$. By $\langle \Ic  \rangle$ we denote the smallest subcategory of $\Tc$ containing $\Ic$  which is closed under finite direct sums, direct summands, shifts and isomorphisms. Set  $\Ic_1 \diamond \Ic_2 = \langle  \Ic_1 * \Ic_2 \rangle$.

Set $\langle \Ic \rangle_0 = 0$. Then inductively define $\langle   \Ic   \rangle_i = \langle \Ic \rangle_{i-1} \diamond  \langle \Ic  \rangle.$
It is clear that $\langle \Ic \rangle_{i-1} $ is a subcategory of  $\langle   \Ic   \rangle_i$ for all $i \geq 1$.
Set
$$ \langle \Ic\rangle_{\infty} = \bigcup_{i \geq 0} \langle   \Ic   \rangle_i.$$

\s \label{thick} Let $\Ic$ be a subcategory of $\Tc$. By $\thick(\Ic)$ we mean the intersection of all thick subcategories of $\Tc$ containing $\Ic$. It can be shown that
$ \thick(\Ic) =  \langle \Ic\rangle_{\infty}$.

\section{A bound of $r_{I, N}^j(M)$ and $s_{I, V}^j(M)$}
\s \label{setup} \emph{Setup:} In this section we assume $A$ is \CM \ of dimension $d \geq 1$ and with an infinite residue field $k$. Let $I$ be an $\m$-primary ideal of $A$. Set $\R = A[It]$ the Rees algebra of $I$. We assume that $N$ is a perfect $A$-module of dimension $t \geq 1$. We also assume $V$ is a \CM \ $A$-module of dimension $v \geq 1$ and of finite injective dimension.
 Let $M$ be a finitely generated $A$-module.  The function $n \rt  \ell(\Tor^A_j(M, N/I^{n+1}N))$  is of polynomial type say of degree
$r_{I, N}^j(M)$. By \ref{tor-ext-deg} we get that $r_{I, N}^j(M) \leq \dim N - 1$ for $j \geq d + 1$.
The function   $n \rt  \ell(\Ext_A^j(M, V/I^{n+1}V))$ is of polynomial type say of degree
$s_{I, V}^j(M)$. By \ref{tor-ext-deg} we get that $s_{I, V}^j(M) \leq \dim V - 1$ for $j \geq d + 1$. The purpose of this section is to prove the following two results.

\begin{theorem}
\label{equi-residue}(with setup as in \ref{setup}). We have
\begin{enumerate}[\rm (1)]
  \item $r_{I, N}^j(k) = r_{I, N}^{d+2}(k) $ for $j \geq d +2$.
  \item $s_{I, V}^j(k) = s_{I, V}^{d+1}(k) $ for $j \geq d +1$.
\end{enumerate}
\end{theorem}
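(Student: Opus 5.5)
The plan is to induct on $t=\dim N$ (resp.\ $v=\dim V$), cutting down by a superficial element, and to show at every stage that the degree $r^j_{I,N}(k)$ is governed by data that do not depend on $j$ once $j\geq d+2$. Throughout, set $T_j(N)=\Tor^A_j(k,L^I(N))=\bigoplus_{n\ge 0}\Tor^A_j(k,N/I^{n+1}N)$. By Proposition \ref{bella} this is a finitely generated graded $\R$-module for $j>d$.

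\emph{A $j$-independent test for eventual vanishing.} Since $\Tor^A_j(k,X)=0$ for a single $j>d$ forces $\projdim X<\infty$, we have, for each $n$, $T_j(N)_n=0$ for some $j\geq d+1$ if and only if $\projdim N/I^{n+1}N<\infty$, and then $T_j(N)_n=0$ for all $j\geq d+1$. By \ref{asymp-ko}, either $\projdim N/I^{n+1}N<\infty$ for all $n\gg0$ or $\projdim N/I^{n+1}N=\infty$ for all $n\gg0$. Hence the alternative ``$T_j(N)_n=0$ for $n\gg 0$'' holds for one $j\geq d+1$ if and only if it holds for all such $j$. In that case $r^j_{I,N}(k)=-1$ for all $j\ge d+1$ and there is nothing to prove. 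So assume $T_j(N)_n\neq 0$ for $n\gg0$ and all $j\geq d+1$, so that $r^j_{I,N}(k)\geq 0$.

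\emph{Base case $t=1$.} For $j\geq d+2$ the sequence $0\rt I^{n+1}N\rt N\rt N/I^{n+1}N\rt 0$, together with $\projdim N=d-1$, gives
\[
\Tor^A_j(k,N/I^{n+1}N)\cong \Tor^A_{j-1}(k,I^{n+1}N).
\]
Choose $x$ to be $I$-superficial for $N$; it is $N$-regular by \ref{lila}(3). Since $\dim N=1$, $(x)$ is a reduction of $I$ with respect to $N$, so $I^{n+1}N=xI^nN\cong I^nN$ for $n\gg 0$. Thus the length is eventually constant, and nonzero by the previous paragraph, so $r^j_{I,N}(k)=0$ for all $j\geq d+2$.

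\emph{Inductive step $t\geq 2$.} By \ref{lila}(4), choose $x\in I$ that is $I$-superficial for $N$ and such that $xt$ is filter regular on $T_{j}(N)$ for the finitely many relevant $j$. (If necessary, use the fact that the modules for $j\gg0$ are controlled by finitely many of them via the isomorphisms in the proof of \ref{bella}, or simply treat one $j$ at a time.) Set $N'=N/xN$; it is perfect of dimension $t-1$. For $n\gg0$ we have $(I^{n+1}N:x)=I^nN$, which gives exact sequences
\[
0\rt N/I^nN\xrightarrow{x} N/I^{n+1}N\rt N'/I^{n+1}N'\rt 0 .
\]
These assemble into $0\rt L^I(N)(-1)\xrightarrow{xt} L^I(N)\rt L^I(N')\rt 0$ in high degrees. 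Applying $\Tor^A(k,-)$ and using filter regularity of $xt$ on $T_j(N)$ and $T_{j-1}(N)$ kills the connecting maps in large degrees. This yields, for $n\gg0$,
\[
0\rt T_j(N)_{n-1}\rt T_j(N)_n\rt T_j(N')_n\rt 0 .
\]
Consequently $r^j_{I,N}(k)=r^j_{I,N'}(k)+1$ if $r^j_{I,N'}(k)\geq 0$, and $r^j_{I,N}(k)=0$ otherwise (a nonzero eventually constant function). By induction $r^j_{I,N'}(k)$ is independent of $j\geq d+2$, hence so is $r^j_{I,N}(k)$.

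\emph{The Ext statement.} Part (2) follows the same scheme with $\Ext^j_A(k,L^I(V))$ and injective dimension in place of projective dimension. The inputs are the second half of \ref{asymp-ko}, the fact that $\Ext^j_A(k,X)=0$ for one $j>\depth A$ forces $\injdim X<\infty$, and the fact that $V/xV$ is again \CM\ of finite injective dimension. In the base case the relevant isomorphism is $\Ext^{j}_A(k,V/I^{n+1}V)\cong\Ext^{j+1}_A(k,I^{n+1}V)$, valid for $j\geq d+1$ because $\Ext^{j}_A(k,V)=0$ for $j>d$. Here the direction of the long exact sequence shifts indices upward, which is why the threshold is $d+1$ rather than $d+2$.

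The main obstacle I expect is the inductive step: choosing a single $x$ that works simultaneously for all $j$ (which are infinitely many), and justifying that the connecting homomorphisms vanish in large degrees. The first thing I would try is to fix $j$ and choose $x$ depending on $j$, which is enough because the conclusion is proved one $j$ at a time.
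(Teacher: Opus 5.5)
Your argument is correct and is essentially the paper's proof. You induct on $\dim N$ (resp.\ $\dim V$), choose a superficial $x$ with $xt$ filter regular on the finitely generated modules $\Tor^A_j(k,L^I(N))$ to get $r^j_{I,N}(k)=r^j_{I,N/xN}(k)+1$, and use finite projective/injective dimension to make eventual vanishing independent of $j$. Your base case via the reduction $I^{n+1}N=xI^nN$ is a valid substitute for the paper's use of the a priori bound $r^j_{I,N}(k)\le \dim N-1=0$.

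The one point to fix is how you resolve your ``main obstacle''. The statement compares $j_0$ with $d+2$, so you must fix $j_0$ and then choose $x$ filter regular on $T_j(N)$ for all $j\in\{d+1,d+2,j_0-1,j_0\}$ at once; the paper takes $j=d+1,\ldots,j_0$. Only then does the same $N/xN$ appear in both $r^{j_0}_{I,N}(k)=r^{j_0}_{I,N/xN}(k)+1$ and $r^{d+2}_{I,N}(k)=r^{d+2}_{I,N/xN}(k)+1$ when you invoke the induction hypothesis. Choosing $x$ depending on $j$ alone does not suffice, because the two sides would then be reduced modulo different elements. Your parenthetical appeal to the proof of Proposition \ref{bella} does not help either, since it does not reduce infinitely many $j$ to finitely many.
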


We also prove the following result in this section.
\begin{theorem}
\label{bound}(with setup as in \ref{setup}). We have
\begin{enumerate}[\rm (1)]
  \item $r_{I, N}^j(M) \leq r_{I, N}^{d+2}(k) $ for $j \geq d +2$.
  \item $s_{I, V}^j(M) \leq s_{I, V}^{d+1}(k) $ for $j \geq d +1$.
\end{enumerate}
\end{theorem}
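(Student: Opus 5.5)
We need to prove $r^j_{I,N}(M)\le r^{d+2}_{I,N}(k)$ for $j\ge d+2$, and the analogous bound for $\Ext$. The plan is to reduce to modules of finite length by taking syzygies, and then induct on length using the long exact sequences of Proposition \ref{rach-prop} applied to the $\R$-module $L^I(N)$. Throughout, write $T_j(X) = \Tor^A_j(X, L^I(N))$; this is graded with $n$-th component $\Tor^A_j(X, N/I^{n+1}N)$.

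\emph{Step 1: the case $\ell(M) < \infty$.} We show by induction on $\ell(M)$ that $r^j_{I,N}(M)\le r^{j}_{I,N}(k)$ for every $j\ge d+2$. Choose a short exact sequence $0\rt k\rt M\rt M'\rt 0$. Proposition \ref{rach-prop}(4) gives, in each degree $n$, an exact sequence
\[
\Tor_j(k,N/I^{n+1}N)\rt \Tor_j(M,N/I^{n+1}N)\rt \Tor_j(M',N/I^{n+1}N).
\]
Hence $\ell(\Tor_j(M,-))\le \ell(\Tor_j(k,-))+\ell(\Tor_j(M',-))$. Since all these functions are of polynomial type, the degree of the middle one is at most the maximum of the two outer degrees. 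Combined with Theorem \ref{equi-residue}(1), which says $r^j_{I,N}(k) = r^{d+2}_{I,N}(k)$ for all $j\ge d+2$, this yields $r^j_{I,N}(M)\le r^{d+2}_{I,N}(k)$ for finite-length $M$ and all $j\ge d+2$.

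\emph{Step 2: reduction of a general $M$.} This step is the main obstacle, because a general $M$ is not filtered by copies of $k$. First replace $M$ by a syzygy. Over a Cohen--Macaulay ring, $X = \Syz^d_A(M)$ is MCM, and $\Tor_j(M,-) = \Tor_{j-d}(X,-)$. So it suffices to bound $\Tor_i(X, N/I^{n+1}N)$ for $i\ge 2$ by $r^{d+2}_{I,N}(k)$.

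For the MCM module $X$, take a maximal $X$-regular sequence $\mathbf{x}=x_1,\dots,x_d$ that is also $A$-regular. The idea is to compare $X$ with the finite-length module $X/\mathbf{x}X$. Using $0\rt X\xrightarrow{x_1}X\rt X/x_1X\rt 0$, we obtain exact sequences
\[
\Tor_{i+1}(X/x_1X,-)\rt \Tor_i(X,-)\xrightarrow{x_1}\Tor_i(X,-).
\]
We would like $x_1$ to act nilpotently on the finitely generated graded modules $\Tor_i(X,L^I(N))$ (Proposition \ref{bella}). Since every component $\Tor_i(X, N/I^{n+1}N)$ has finite length, $\m$-powers kill each graded piece. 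The kernel of multiplication by $x_1$ then controls the length up to degree: on the finitely generated graded $\R$-module, $\ell(\Tor_i(X,-)_n)\le \sum_{s} \ell(\ker(x_1^{s}))$. To make this effective, the first attempt is to choose $x_1\in\m^c$ with $c$ large, so that $x_1$ annihilates $\Tor_i(X,L^I(N))$. This is possible if that module is annihilated by a fixed power of $\m$. That holds because $\Tor_i(X,N)=0$ for $i>0$: on the punctured spectrum $X$ is free up to the support of $N$, so the annihilator stabilizes.

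Granting this, multiplication by $x_1$ is zero, and $\Tor_i(X,-)$ embeds into $\Tor_i(X/x_1X,-)$, giving $r^i(X)\le r^{i}(X/x_1X)$ (shift bookkeeping aside). Iterating over $x_2,\dots,x_d$ gives $r^i(X)\le r^{i+d}(X/\mathbf{x}X)$. This is at most $r^{d+2}_{I,N}(k)$ by Step 1, since $i+d\ge d+2$.

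\emph{Step 3: the $\Ext$ case.} The proof of (2) is formally dual. Use $\Ext^j_A(-,L^I(V))$ and its finite generation from Proposition \ref{bella} for $j>d$. Compare with finite-length modules via the sequences induced by $0\rt X\xrightarrow{x}X\rt X/xX\rt 0$; now $\Ext^i(X,-)$ surjects onto, or injects into, $\Ext^{i+1}(X/xX,-)$ when $x$ annihilates it. Then apply Step 1 with Theorem \ref{equi-residue}(2), which gives $s^j_{I,V}(k)=s^{d+1}_{I,V}(k)$ for $j\ge d+1$, so that the threshold becomes $j\ge d+1$.
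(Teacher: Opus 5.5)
Your route differs from the paper's, and it works once two steps are repaired.

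\textbf{Comparison with the paper.} The paper keeps $M$ arbitrary and inducts on $t=\dim N$. It cuts $N$ by an $I$-superficial element $x$ such that $xt$ is filter regular on $\Tor_j(M,L^I(N))$ and on $\Tor_j(k,L^I(N))$, so by Lemma \ref{mod-filter-sup} both degrees drop by exactly one. The base case $t=1$ is immediate: for $j\ge d+1$ all degrees lie in $\{-1,0\}$, and $r^{j}_{I,N}(M)\ge 0$ forces $\projdim N/I^{n+1}N=\infty$ for $n\gg 0$, hence $r^{d+2}_{I,N}(k)=0$.

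You instead keep $N$ fixed and do d\'evissage in the first variable. You pass to the MCM module $X=\Syz^d_A(M)$, cut it down to finite length by regular elements chosen in a power of $\m$ that kills the relevant graded $\Tor$ module (so each connecting map is onto and the homological index rises by one per cut), and then filter by copies of $k$. This gives the sharper index-by-index comparison $r^j_{I,N}(M)\le r^j_{I,N}(k)$ and $s^j_{I,V}(M)\le s^j_{I,V}(k)$ for all $j\ge d+1$. It uses only length inequalities, no superficial or filter-regular elements, and about $N$ and $V$ only that $\projdim N$ and $\injdim V$ are finite. The rest of the setup enters only through Theorem \ref{equi-residue}. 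The paper's proof, by contrast, is the same induction that proves Theorem \ref{equi-residue}, so in context it costs nothing extra.

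\textbf{First repair: uniform annihilation.} This is the crux, and the reason you give (``on the punctured spectrum $X$ is free up to the support of $N$'') is not valid; $X$ need not be locally free on the punctured spectrum. The correct reason uses what you cite just before. For $i\ge 1$, $\Tor_i(X,L^I(N))\cong\Tor_{i+d}(M,L^I(N))$ is a finitely generated graded $\R$-module by Proposition \ref{bella}, and its components have finite length. So if $e_1,\dots,e_s$ are homogeneous generators with $\m^c e_l=0$ for all $l$, then $\m^c$ kills the whole module.

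Since $X/x_1X$ is no longer MCM, you need the same property at every later stage. Write $W=N/I^{n+1}N$ and $Y_s=X/(x_1,\dots,x_s)X$. The exact sequence $0\rt \Tor_m(Y_{s-1},W)/x_s\Tor_m(Y_{s-1},W)\rt\Tor_m(Y_s,W)\rt(0:_{\Tor_{m-1}(Y_{s-1},W)}x_s)\rt 0$ shows, by induction on $s$, that $\Tor_m(Y_s,L^I(N))$ is killed by a power of $\m$ for every $m\ge s+1$. You then choose $x_{s+1}$ in that power of $\m$ and $Y_s$-regular, which is possible because $\depth Y_s=d-s>0$.

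\textbf{Second repair: the index.} The inequality you must iterate comes from the surjection $\Tor_{i+1}(X/x_1X,W)\rt\Tor_i(X,W)$ in your displayed sequence, which gives $r^i_{I,N}(X)\le r^{i+1}_{I,N}(X/x_1X)$. The embedding $\Tor_i(X,W)\hookrightarrow\Tor_i(X/x_1X,W)$ that you actually invoke keeps the index at $i$. Iterating it would at best yield $r^i_{I,N}(X)\le r^i_{I,N}(k)$, which Theorem \ref{equi-residue} does not control when $i<d+2$. Dually, in the $\Ext$ case the map to use is the injection $\Ext^i(X,W)\hookrightarrow\Ext^{i+1}(X/xX,W)$, now with $W=V/I^{n+1}V$.
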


The proof of Theorems \ref{equi-residue} and \ref{bound} is by induction on $\dim N = t \geq 1$ and on $\dim V = v \geq 1$. To enable the induction when $t \geq 2$ or $v \ge 2$ we need the following result.
\begin{lemma}
\label{mod-filter-sup}(with setup as in \ref{setup}). We have
\begin{enumerate}[\rm (1)]
  \item Fix $j_0 \geq d + 3$. Assume $t \geq 2$. Then
  there exists $x \in I$ which is $I$-superficial \wrt  \ $N$ and $xt$ is $\Tor^A_j(M, L^I(N))$-filter regular  for $j = d +1,\ldots, j_0$.
   Set $\ov{N} = N/x N$. Note $\ov{N}$ is perfect of dimension $t -1$. In particular we have exact sequences for $j = d+2, \ldots, j_0$ and $n \gg 0$
      $$ 0 \rt \Tor^A_j(M, N/I^n N) \rt \Tor^A_j(M, N/I^{n + 1} N) \rt \Tor^A_j(M, \ov{N}/I^{n+1}\ov{N}) \rt 0.$$
  \item Fix $j_0 \geq d + 2$. Assume $v \geq 2$. Then there exists $y \in I$ which is $I$-superficial \wrt  \ $V$ and $yt$ is $\Ext_A^j(M, L^I(V))$-filter regular  for $j = d +1,\ldots, j_0 + 1$. Set $\ov{V} = V/y V$. Note $\ov{V}$ is \CM \ of dimension $t-1$ and of finite injective dimension. In particular we have exact sequences for $j = d+1, \ldots, j_0$ and $n \gg 0$
      $$ 0 \rt \Ext_A^j(M, V/I^n V) \rt \Ext_A^j(M, V/I^{n + 1} V) \rt \Ext_A^j(M, \ov{V}/I^{n+1}\ov{V}) \rt 0.$$
\end{enumerate}
 \end{lemma}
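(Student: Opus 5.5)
We argue (1); (2) is dual. By Proposition \ref{bella}, the modules $E_j = \Tor^A_j(M, L^I(N))$ are finitely generated graded $\R$-modules for $j = d+1, \ldots, j_0$. By \ref{lila}(4) we can choose $x \in I$ that is $I$-superficial \wrt\ $N$ and such that $xt$ is $E_j$-filter regular for all these $j$. Since $N$ is perfect of dimension $t \geq 2$ and $A$ is \CM, $N$ is \CM\ and $\depth N = t > 0$. So \ref{lila}(3) shows that $x$ is $N$-regular and that $(I^{n+1}N \colon x) = I^nN$ for $n \gg 0$. Hence $\ov{N} = N/xN$ is \CM\ of dimension $t-1$. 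It has finite projective dimension, because $0 \rt N \xrightarrow{x} N \rt \ov{N} \rt 0$ is exact. By Auslander--Buchsbaum $\projdim \ov{N} = \projdim N + 1 = d - (t-1)$, so $\ov{N}$ is perfect.

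Next we build the short exact sequence of $A$-modules
$$0 \rt N/I^nN \xrightarrow{\;x\;} N/I^{n+1}N \rt \ov{N}/I^{n+1}\ov{N} \rt 0 \quad (n \gg 0).$$
The cokernel of multiplication by $x$ is $N/(xN + I^{n+1}N) = \ov{N}/I^{n+1}\ov{N}$. Injectivity is exactly the equality $(I^{n+1}N \colon x) = I^nN$. We assemble these maps into the map of graded $\R$-modules $L^I(N)(-1) \xrightarrow{xt} L^I(N)$. It is injective in degrees $n \gg 0$, and its cokernel in degree $n$ is $\ov{N}/I^{n+1}\ov{N}$. To avoid working with non-finitely-generated modules in low degrees, we can truncate: the kernel $K$ is concentrated in finitely many degrees. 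Alternatively, we simply apply $\Tor^A_*(M,-)$ degreewise to the sequences above for each fixed large $n$; this is legitimate by Proposition \ref{rach-prop}(2).

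Then we take the long exact sequence in $\Tor^A(M,-)$ for each $n \gg 0$:
$$\Tor^A_{j+1}(M,\ov{N}/I^{n+1}\ov{N}) \rt \Tor^A_j(M,N/I^nN) \xrightarrow{x} \Tor^A_j(M,N/I^{n+1}N) \rt \Tor^A_j(M,\ov{N}/I^{n+1}\ov{N}) \rt \Tor^A_{j-1}(M,N/I^nN) \xrightarrow{x} \cdots$$
The map $\Tor^A_j(M,N/I^nN) \rt \Tor^A_j(M,N/I^{n+1}N)$ is the degree $n$ component of multiplication by $xt$ on $E_j$. Filter regularity of $xt$ on $E_j$ makes it injective for $n \gg 0$, for each $j = d+1,\ldots,j_0$; since only finitely many $j$ occur, one $n_0$ works for all of them. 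So for $d+2 \leq j \leq j_0$, the connecting map into $\Tor_j(M,N/I^nN)$ and the connecting map out of $\Tor_j(M,\ov{N}/I^{n+1}\ov{N})$ both vanish: the latter because $xt$ is injective on $E_{j-1}$ with $j-1 \geq d+1$. This gives the claimed short exact sequences. (The extra index $j_0 \geq d+3$ is harmless.) This bookkeeping of which $j$ need filter regularity is the only delicate point.

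For (2) the same argument applies to $\Ext$. Proposition \ref{bella} makes the modules $\Ext^j_A(M,L^I(V))$ finitely generated for $j>d$, and we choose $y$ filter regular on them for $j = d+1,\ldots,j_0+1$. Since $V$ is \CM\ of positive depth, $y$ is $V$-regular, so $\ov{V}$ is \CM\ of dimension $v-1$. It has finite injective dimension, via the long exact $\Ext(k,-)$ sequence of $0 \rt V \rt V \rt \ov{V} \rt 0$. In the $\Ext$ long exact sequence, the connecting map $\Ext^j(M,\ov{V}/I^{n+1}\ov{V}) \rt \Ext^{j+1}(M,V/I^nV)$ is followed by multiplication by $yt$ on $\Ext^{j+1}$. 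That map is injective for $n \gg 0$ whenever $j+1 \leq j_0+1$, which explains the shifted range. Injectivity on the left uses filter regularity on $\Ext^j$ for $j \geq d+1$.
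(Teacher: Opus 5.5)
Your proposal is correct and follows essentially the same route as the paper. You choose $x$ (resp.\ $y$) via \ref{bella} and \ref{lila}(4), and use $(I^{n+1}N \colon x) = I^nN$ for $n \gg 0$ to get the degreewise exact sequences $0 \rt N/I^nN \rt N/I^{n+1}N \rt \ov{N}/I^{n+1}\ov{N} \rt 0$. Filter regularity of $xt$ on $\Tor_j$ and $\Tor_{j-1}$ (resp.\ of $yt$ on $\Ext^j$ and $\Ext^{j+1}$) then kills both connecting maps, exactly as in the paper. You also supply the checks that $\ov{N}$ is perfect and that $\ov{V}$ has finite injective dimension, which the paper only notes without argument; and you correctly give the dimension of $\ov{V}$ as $v-1$, where the statement has the typo $t-1$.
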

\begin{proof}
(1) By \ref{bella},  $\Tor^A_i(M, L^I(N))$ are finitely generated $\R$-modules for all $i > d$. By \ref{lila}  there exists $x \in I$ which is $I$-superficial \wrt  \ $N$ and $xt$ is $\Tor^A_j(M, L^I(N))$-filter regular  for $j = d +1,\ldots, j_0$. As $N$ is \CM \ of positive dimension we get that $x$ is $N$-regular and $(I^{n+1}N \colon x) = I^nN$ for $n \gg 0$. say $n \geq n_0$.  We have an exact sequence of $\R$-modules
 \[
 0 \rt \bigoplus_{n = 0}^{n_0 -1}\frac{(I^{n+1}N \colon x)}{I^nN} \rt L^I(N)(-1) \xrightarrow{xt} L^I(N) \rt L^I(\ov{N}) \rt 0.
 \]
This induces short exact sequences for $n \geq n_0$
\[
0 \rt N/I^{n}N \xrightarrow{\alpha_n} N/I^{n + 1}N  \xrightarrow{\rho_n} \ov{N}/I^{n+1}\ov{N} \rt 0.
\]
Therefor for $n \geq n_0$ we have an exact sequence for all $j$
$$ \rt \Tor^A_j(M, N/I^{n}N) \xrightarrow{\alpha_n^j(M)} \Tor^A_j(M, N/I^{n + 1}N)\xrightarrow{\rho_n^j(M)} \Tor^A_j(M, \ov{N}/I^{n + 1}\ov{N}) \rt  $$
Here $\alpha_n^j(M) = \Tor^A_j(M, \alpha_n)$ and $\rho_n^j(M) = \Tor^A_j(M, \rho_n)$.
We note that $\alpha_n^j$ is nothing but the $n^{th}$ component of the map $\Tor^A_j(M, L^I(N))(-1) \xrightarrow{xt} \Tor^A_j(M, L^I(N))$ and so is injective for $j = d+ 1, \ldots, j_0$ and $n \gg 0$ as  $xt$ is $\Tor^A_j(M, L^I(N))$-filter regular  for $j = d +1,\ldots, j_0$.
It follows that for $j  =  d+2, \ldots, j_0$ and $n \gg 0$
we have exact sequences
      $$ 0 \rt \Tor^A_j(M, N/I^n N) \rt \Tor^A_j(M, N/I^{n + 1} N) \rt \Tor^A_j(M, \ov{N}/I^{n+1}\ov{N}) \rt 0.$$

(2)  By \ref{bella}, $\Ext^i_A(M, L^I(V))$ are finitely generated $\R$-modules for all $i > d$. By \ref{lila}  there exists $x \in I$ which is $I$-superficial \wrt  \ $V$ and $xt$ is $\Ext_A^j(M, L^I(U))$-filter regular  for $j = d +1,\ldots, j_0 + 1$. Rest of the arguments are similar to (1).
\end{proof}

\s \label{fin-dim} We note the following two obvious facts. Recall the degree of the zero polynomial is $-1$.
\begin{enumerate}
  \item $\projdim N/I^{n + 1}N < \infty$ for all $n \gg 0$ if and only if $r_{I, N}^j(k) = -1 $ for \emph{any} $j \geq d +1$.
  \item $\injdim V/I^{n+1}V  < \infty $ for all $n \gg 0$ if and only if $s_{I, V}^j(k) = -1 $ for \emph{any} $j \geq d +1$
\end{enumerate}

We first give
\begin{proof}[Proof of Theorem \ref{equi-residue}] (1) We prove the result by induction on $t = \dim N$. Fix $j_0 \geq d + 2$.
By \ref{setup} we have $r_{I, N}^{j}(k) = -1 $ or zero for all $j \geq d + 1$.
By \ref{fin-dim}(1) it follows that $r_{I, N}^{j_0}(k) = -1 $ if and only if $r_{I, N}^{d + 2}(k) = -1$ if and only if $\projdim A/I^n$ is finite for all $n \gg 0$.
The assertion follows when $t = 1$.

Now assume that $\dim N = t \geq 2$ and the assertion holds for all perfect modules $L$ of dimension  $t - 1$. By \ref{fin-dim}(1) it follows that $r_{I, N}^{j_0}(k) = -1 $ if and only if $r_{I, N}^{d + 2}(k) = -1$. So assume that $\projdim N/I^nN = \infty$ for all $n \gg 0$. Thus $r_{I, N}^j(k) \geq 0$ for all $j \geq d + 1$. By \ref{mod-filter-sup}(1)
it follows that there exists $x$ an $I$-superficial sequence \wrt \ $N$ such that $xt$ is $\Tor^A_j(M, L^I(N))$-filter regular  for $j = d +1,\ldots, j_0$. Note $\ov{N} = N/xN$ is perfect $A$-module of dimension $t-1$. By the exact sequence in \ref{mod-filter-sup}(1) we get that $r_{I,N}^{j_0}(k) = r_{I, \ov{N}}^{j_0}(k) + 1$ and $r_{I, N}^{d+2}(k) = r_{I, \ov{N}}^{d+2}(k) + 1$. The result follows by induction.

(2) This can be proved on the same lines as (1). We have to use \ref{fin-dim}(2)  and \ref{mod-filter-sup}(2).
\end{proof}

Next we give
\begin{proof}[Proof of Theorem \ref{bound}]
(1) Fix $j_0 \geq d + 2$
We note that we have nothing to show if $r_{I, N}^{j_0}(M) = -1$. Also if $r_{I, N}^{j_0}(M) \geq 0$ then necessarily $\projdim N/I^{n+1}N = \infty$ for all $n \gg 0$. So $r_{I,N}^{d+2}(k) \geq 0$.
This proves the result when $\dim N = 1$ as the possible degrees are $-1$ and $0$, see \ref{setup}.  We prove the result by induction on $t = \dim N$. The case $t = 1$ holds as discussed before. Now assume $\dim N = t \geq 2$ and the result is known for perfect modules of dimension $t-1$. We will assume that $r_{I, N}^{j_0}(M) \geq 0$ as otherwise we have nothing to show. It follows that $\projdim N/I^{n+1}N = \infty$ for all $n \gg 0$. So $r_{I,N}^{d+2}(k) \geq 0$. We may choose $x$ which is $I$-superficial \wrt \ $N$ and $xt$ is $\Tor^A_j(M, L^I(N))$-filter regular  for $j = d +1,\ldots, j_0$, Furthermore we can also assume that $xt$ is $\Tor^A_j(k, L^I(N))$-filter regular  for $j = d +1, d+2$. Note $\ov{N} = N/xN$ is perfect of dimension $t-1$. By the exact sequence in \ref{mod-filter-sup}(1) we get that $r_{I,N}^{j_0}(M) = r_{I, \ov{N}}^{j_0}(M) + 1$ and $r_{I, N}^{d+2}(k) = r_{I, \ov{N}}^{d+2}(k) + 1$. The result follows by induction.

(2) This can be proved on the same lines as (1). We have to use \ref{fin-dim}(2)  and \ref{mod-filter-sup}(2).
\end{proof}
\section{Polynomials associated to complexes in the bounded derived category and the singularity category}
In this section $(A,\m)$ is a Noetherian local ring and $I$ is an $\m$-primary ideal in $A$. Also $N$, $V$ are finitely generated $A$-modules
\s \emph{(chain)-Complexes:}
 We will consider chain complexes over $A$
\[
\Xb \colon  \cdots \rt \Xb_i \xrightarrow{\partial_i} \Xb_{i-1} \xrightarrow{\partial_{i-1}} \Xb_{i-2}\rt \cdots
\]
with $\partial_{i-1} \circ \partial_i = 0$ for all $i$. By $H_i(\Xb)$ we denote the $i^{th}$ homology of $\Xb$. By the $m^{th}$ shift  of $\Xb$ we mean the chain complex $\Xb[m]$ with $\Xb[m]_i = \Xb_{i+m}$ and differential $\partial[m]_{i} = (-1)^m\partial_{i+m}$.

\s By $C^b(\md A)$ we mean the category of bounded chain complexes of finitely generated left $A$-modules. Let $\K^b(A)$ be the corresponding homotopy category.
Furthermore let $\D^b(A)$ be its derived category. We note that $\K^b( A)$ and $\D^b(A)$ are triangulated categories with shift functor $[-1]$. Let $C^{-,b}(\proj A)$ be the category of bounded above complexes of finitely generated projective (free as  $A$ is local) $A$-modules with finitely generated (total) homology and let $K^{-,b}(\proj A)$ be the corresponding homotopy category. It is well-known that we have an equivalence of triangulated categories
$ \K^{-,b}(\proj A) \cong  \D^b(A)$. We will work with $ \K^{-,b}(\proj A)$. By $\D_{sg}(A)$ we denote the singularity category of $A$, i.e. $$\D_{sg}(A) = \K^{-,b}(\proj A)/\thick A.$$

 \s \label{p-bound} Let $\Xb \in \K^{-,b}(\proj A)$. For $n \geq 0$ we consider the chain complexes $\Xb \otimes N/I^{n+1}N$ and the co-chain complexes $\Hom_A(\Xb, V/I^{n+1}V)$.
  Fix $i \in \Z$.
  \begin{enumerate}
    \item  By Theorem \ref{complex} it follows that the function $n \rt \ell(H_i(\Xb \otimes N/I^{n+1}N))$ is of polynomial type and of degree bounded by $\dim N$.
    \item We have
    \begin{align*}
      \Hom_A(\Xb, V/I^{n+1}V) & \cong \Hom_A(\Xb, A)\otimes V/I^{n+1}V \\
       &\cong (\Hom_A(\Xb, A)\otimes V)\otimes A/I^{n+1}  \\
       &\cong \Hom_A(\Xb, V)\otimes A/I^{n+1}.
    \end{align*}
 We have $\Hom_A(\Xb, V)$ is a complex of finitely generated $A$-modules. It follows from \ref{complex} that the function $n \rt \ell(H^i(\Hom_A(\Xb, V/I^{n+1}V)$ is of polynomial type and of degree bounded by $\dim A$.
     \end{enumerate}

    The following lemma can be proved easily.
\begin{proposition}(with setup as above)
\label{iso} Let $\Xb, \Yb \in \K^{-,b}(\proj A)$. If $\Xb \cong \Yb$ then for all $i \in \Z$ and for all $n \geq 0$ we have isomorphism of complexes
\begin{enumerate}[\rm (1)]
  \item  $\Xb \otimes N/I^{n+1}N \cong \Yb \otimes N/I^{n+1}N$.
  \item $\Hom_A(\Xb, V/I^{n+1}V) \cong \Hom_A(\Yb, V/I^{n+1}V)$
\end{enumerate}
\end{proposition}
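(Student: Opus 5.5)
The plan is to make precise what $\Xb \cong \Yb$ means in $\K^{-,b}(\proj A)$ and then to transport this through the two additive functors $-\otimes N/I^{n+1}N$ and $\Hom_A(-, V/I^{n+1}V)$. Both functors act degreewise on complexes and preserve chain homotopies, so the statement should reduce to functoriality on the homotopy category.

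\textbf{Step 1: fixing the data of the isomorphism.} An isomorphism $\Xb \cong \Yb$ in $\K^{-,b}(\proj A)$ is given by chain maps $f \colon \Xb \rt \Yb$ and $g \colon \Yb \rt \Xb$. These satisfy $g\circ f \simeq 1_{\Xb}$ and $f \circ g \simeq 1_{\Yb}$, via homotopies $h$ and $h'$ respectively. Since $\K^{-,b}(\proj A)$ is equivalent to $\D^b(A)$, an isomorphism in the derived category is also represented in this way, as a homotopy equivalence between the bounded-above free resolutions.

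\textbf{Step 2: applying the functors.} Write $F = -\otimes_A N/I^{n+1}N$. This is an additive functor on modules, so it extends degreewise to complexes and sends chain maps to chain maps. It also sends a null-homotopy $s$ with $\partial s + s\partial = u$ to $F(s)$, which satisfies $\partial F(s) + F(s)\partial = F(u)$, because $F$ is additive and commutes with composition. Hence $F(f)$ and $F(g)$ are mutually inverse up to homotopy, namely via $F(h)$ and $F(h')$.

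The same argument applies to the contravariant additive functor $\Hom_A(-, V/I^{n+1}V)$. Here the homotopies become $\Hom(h, V/I^{n+1}V)$ and $\Hom(h', V/I^{n+1}V)$, with the inequalities of degrees reversed. So both constructions descend to functors on the homotopy categories, and each sends isomorphisms to isomorphisms.

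\textbf{Step 3: reading off the conclusion.} The complexes are therefore isomorphic in the homotopy category, that is, homotopy equivalent. In particular, for every $i \in \Z$ and every $n \geq 0$ we get $H_i(\Xb \otimes N/I^{n+1}N) \cong H_i(\Yb \otimes N/I^{n+1}N)$. Similarly $H^i(\Hom_A(\Xb, V/I^{n+1}V)) \cong H^i(\Hom_A(\Yb, V/I^{n+1}V))$. These homology isomorphisms are what is actually used later, for instance for the polynomial functions in \ref{p-bound}.

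\textbf{Main subtlety.} The word ``isomorphism of complexes'' must be read as isomorphism in $\K(A)$, i.e.\ homotopy equivalence. It cannot mean a degreewise isomorphism of complexes, since $\Xb$ and $\Yb$ may differ by contractible summands. Once this reading is fixed, the argument is routine. The only check is that working with bounded-above complexes causes no convergence issues, and it causes none, because all constructions are degreewise.
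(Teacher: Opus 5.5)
Your argument is correct and is the routine verification the paper intends when it says the statement ``can be proved easily''; the paper gives no proof. You use that additive functors applied degreewise preserve chain homotopies, so a homotopy equivalence $\Xb \simeq \Yb$ remains one after applying $-\otimes_A N/I^{n+1}N$ and $\Hom_A(-, V/I^{n+1}V)$. You are also right to read ``isomorphism of complexes'' in $\K(A)$ rather than degreewise, and the resulting isomorphisms on $H_i$ and $H^i$ are exactly what Proposition \ref{triangle} and Theorem \ref{mom} use later.
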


 \s\label{defn} Let $\Xb \in \K^{-,b}(\proj A)$. For $i \in \Z$  define $\alpha_{I, N}^i(\Xb) = \deg (n \rt \ell(H_i(\Xb \otimes N/I^{n+1}N)))$ and $\beta_{I, V}^i(\Xb) = \deg (n \rt \ell(H^i(\Hom_A(\Xb, V/I^{n+1}))))$. The integers  $\alpha_{I, N}^i(\Xb)$ and $\beta_{I, V}^i(\Xb)$ are bounded. Define
 \begin{enumerate}
   \item  $\alpha_{I, N}(\Xb) = \limsup_{i \rt \infty} \alpha_{I, N}^i(\Xb). $
   \item $\beta_{I, V}(\Xb) = \limsup_{i \rt \infty}  \beta_{I, V}^i(\Xb)$.
 \end{enumerate}

 We need the following result.
 \begin{proposition}\label{triangle}
 (with setup as above)
 Let $\Xb, \Yb, \Zb \in \K^{-,b}(\proj A)$.
  Assume we have a triangle $\Xb \xrightarrow{f} \Yb \rt \Zb \rt \Xb[-1]$ in $\K^{-,b}(\proj A)$.Then
\begin{enumerate}[\rm (1)]
\item For all $i \in \Z$  and for all $n \geq 0$ we have a long exact sequence in homology
\begin{align*}
  \cdots  &\rt  H_i(\Xb\otimes N/I^{n+1}N) \rt  H_i(\Yb\otimes N/I^{n+1}N)  \rt  H_i(\Zb\otimes N/I^{n+1}N)  \\
  &\rt  H_{i-1}(\Xb\otimes N/I^{n+1}N) \rt \cdots
\end{align*}
  \item For all $i \in \Z$ we have
  $$\alpha_{I, N}^i(\Yb) \leq \max\{ \alpha_{I, N}^i(\Xb),  \alpha_{I, N}^i(\Zb)\}.$$
  \item  We have
  $$\alpha_{I, N}(\Yb) \leq \max\{ \alpha_{I, N}(\Xb),  \alpha_{I, N}(\Zb)\}.$$
  \item  For all $i \in \Z$  and for all $n \geq 1$ we have a long exact sequence in cohomology
\begin{align*}
  \cdots  &\rt  H^i(\Hom(\Zb, V/I^{n}V)) \rt  H^i(\Hom(\Yb, V/I^{n}V))  \rt  H^i(\Hom(\Xb, V/I^{n}V))  \\
  &\rt H^{i+1}(\Hom(\Zb, V/I^{n}V))\rt \cdots
\end{align*}
  \item For all $i \in \Z$ we have
  $$\beta_{I, V}^i(\Yb) \leq \max\{ \beta_{I, V}^i(\Xb),  \beta_{I, V}^i(\Zb)\}.$$
  \item  We have
  $$\beta_{I, V}(\Yb) \leq \max\{ \beta_{I, V}(\Xb),  \beta_{I, V}(\Zb)\}.$$
\end{enumerate}
 \end{proposition}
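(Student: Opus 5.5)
The plan is to realize the triangle by a short exact sequence of complexes of free modules and then tensor (respectively apply $\Hom$) with the finite length modules $N/I^{n+1}N$ and $V/I^{n}V$.

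Up to isomorphism in $\K^{-,b}(\proj A)$, we may assume $\Zb = \cone(f)$. Thus $\Zb_i = \Yb_i \oplus \Xb_{i-1}$ with the usual cone differential, and the triangle is the standard one. By Proposition \ref{iso}, replacing $\Zb$ by an isomorphic complex does not change the complexes $\Zb\otimes N/I^{n+1}N$ and $\Hom_A(\Zb, V/I^nV)$ up to isomorphism, hence does not change their (co)homology. We then have a degreewise split short exact sequence of complexes
\[ 0 \rt \Yb \rt \cone(f) \rt \Xb[-1] \rt 0. \]
Because each term splits degreewise, the sequence stays exact after applying $-\otimes_A N/I^{n+1}N$ or $\Hom_A(-, V/I^nV)$. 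The snake lemma then gives long exact sequences in (co)homology. For (1), the resulting connecting map $H_i(\Xb[-1]\otimes N/I^{n+1}N)=H_{i-1}(\Xb\otimes N/I^{n+1}N) \rt H_{i-1}(\Yb \otimes N/I^{n+1}N)$ is identified with $H_{i-1}(f\otimes 1)$, up to sign. Reindexing gives exactly the sequence in (1). Item (4) is obtained the same way: the contravariant functor reverses the arrows, and the shift turns $H^i(\Hom(\Xb[-1],-))$ into $H^{i+1}$, up to the sign conventions.

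For (2), take the exact piece $H_i(\Xb\otimes N/I^{n+1}N)\rt H_i(\Yb\otimes N/I^{n+1}N)\rt H_i(\Zb\otimes N/I^{n+1}N)$. Additivity of length along this three-term exact sequence gives
\[ \ell(H_i(\Yb\otimes N/I^{n+1}N)) \leq \ell(H_i(\Xb\otimes N/I^{n+1}N))+\ell(H_i(\Zb\otimes N/I^{n+1}N)) \]
for all $n$. All three functions are of polynomial type by \ref{p-bound}, and all are nonnegative. So the degree of the left side is at most the degree of the sum. Since both summands are eventually nonnegative, their leading coefficients are positive and cannot cancel, so the degree of the sum is the maximum of the two degrees. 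The convention that the zero polynomial has degree $-1$ covers the degenerate cases. Item (5) follows identically from the segment $H^i(\Hom(\Zb,-))\rt H^i(\Hom(\Yb,-))\rt H^i(\Hom(\Xb,-))$ of the sequence in (4). Note that the indexing by $n$ versus $n+1$ does not affect degrees.

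Items (3) and (6) follow from (2) and (5) by taking $\limsup_{i\rt\infty}$. We use that $\limsup_i \max\{a_i,b_i\} = \max\{\limsup a_i, \limsup b_i\}$ for bounded integer sequences, and boundedness is given by \ref{defn}. The only point needing any care is fixing the cone and sign conventions so that the connecting maps and index shifts match the stated sequences. That bookkeeping is the main, though mild, obstacle; the inequalities themselves are formal.
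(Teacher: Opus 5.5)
Your proposal is correct and is essentially the paper's proof: you replace $\Zb$ by $\cone(f)$, apply $-\otimes_A N/I^{n+1}N$ (resp.\ $\Hom_A(-,V/I^nV)$) to the degreewise split sequence $0 \rt \Yb \rt \cone(f) \rt \Xb[-1] \rt 0$, and then get (2), (3), (5), (6) from subadditivity of length and $\limsup$, which you spell out in more detail than the paper does. For the record, with $\Xb[-1]_i=\Xb_{i-1}$ the identification you need in (4) is $H^{i+1}(\Hom_A(\Xb[-1],W)) = H^{i}(\Hom_A(\Xb,W))$; this bookkeeping does not affect the inequalities.
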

 \begin{proof}
   (1) Set $\Zb^\prime = \cone(f)$. We note that $\Zb \cong \Zb^\prime$. We have a short exact sequence of complexes $0 \rt \Yb \rt \Zb^\prime \rt \Xb[-1] \rt 0$. As all the modules involved are free we get an exact sequence of complexes for all $m \geq 0$,
   $$  0 \rt \Yb\otimes N/I^{n+1}N \rt \Zb^\prime\otimes N/I^{n+1}N \rt \Xb\otimes N/I^{n+1}N[-1] \rt 0.  $$
The long exact sequence in homology and Proposition \ref{iso} gives the result.

(2) This follows from (1).

(3)  This follows from (2).

(4) This is similar to (1).

(5) This follows from (4).

(6)  This follows from (5).
 \end{proof}

 Next we begin our investigation in the singularity category.
 Our result is
 \begin{theorem}
\label{mom}
(with setup as above)
Let $\Xb, \Yb, \Zb \in \K^{-,b}(\proj A)$. Set $\D_{sg} = \D_{sg}(A)$
\begin{enumerate}[\rm (1)]
  \item If $\Xb \cong \Yb$ in $\D_{sg}$ then
  \begin{enumerate}[\rm (a)]
    \item $         \alpha_{I, N}(\Xb) =   \alpha_{I, N}(\Yb)$.
    \item  $\beta_{I, V}(\Xb) =   \beta_{I, V}(\Yb)        $

  \end{enumerate}
  \item  Assume we have a triangle $s \colon \Xb \rt \Yb \rt \Zb \rt \Xb[-1]$ in $\D_{sg}$. Then
\begin{enumerate}[\rm (a)]
  \item $$\alpha_{I, N}(\Yb) \leq \max\{ \alpha_{I, N}(\Xb),  \alpha_{I, N}(\Zb)\}.$$
  \item  $$\beta_{I, V}(\Yb) \leq \max\{ \beta_{I, V}(\Xb),  \beta_{I, V}(\Zb)\}.$$
\end{enumerate}
\end{enumerate}
 \end{theorem}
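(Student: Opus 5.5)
The plan is to reduce everything to Proposition \ref{triangle}, using the structure of the Verdier quotient $\D_{sg}(A) = \K^{-,b}(\proj A)/\thick A$ and one vanishing observation about objects of $\thick A$.

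\emph{Step 1 (perfect complexes contribute nothing).} Let $\Pb \in \thick A \subseteq \K^{-,b}(\proj A)$. I claim $\alpha_{I,N}(\Pb) = \beta_{I,V}(\Pb) = -1$.
\begin{enumerate}
\item An object of $\thick A$ is isomorphic in $\K^{-,b}(\proj A)$ to a bounded complex $\Qb$ of finitely generated free modules. This holds because the bounded free complexes form a thick subcategory of $\K^{-,b}(\proj A)$ containing $A$.
\item By Proposition \ref{iso}, we may compute with $\Qb$ instead of $\Pb$.
\item The complexes $\Qb \otimes N/I^{n+1}N$ and $\Hom_A(\Qb, V/I^{n+1}V)$ vanish in degrees $i \gg 0$, uniformly in $n$. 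So $\alpha^i_{I,N}(\Pb) = \beta^i_{I,V}(\Pb) = -1$ for $i \gg 0$, and both limsups equal $-1$.
\end{enumerate}
Shifting a complex only translates the index $i$. Hence $\alpha_{I,N}(\Xb[m]) = \alpha_{I,N}(\Xb)$ and $\beta_{I,V}(\Xb[m]) = \beta_{I,V}(\Xb)$ for every $m$, because a limsup as $i \rt \infty$ is unaffected by a finite translation.

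\emph{Step 2 (part (1)).} An isomorphism $\Xb \cong \Yb$ in $\D_{sg}$ is represented by a roof $\Xb \xleftarrow{s} \Wb \xrightarrow{g} \Yb$ in $\K^{-,b}(\proj A)$ in which both $s$ and $g$ have cones in $\thick A$. Here $g$ is in the multiplicative system because it represents an isomorphism, and the quotient by a thick subcategory is saturated.
\begin{enumerate}
\item Complete $s$ to a triangle $\Wb \rt \Xb \rt \Pb \rt \Wb[-1]$ with $\Pb \in \thick A$.
\item Proposition \ref{triangle}(3) gives $\alpha_{I,N}(\Xb) \leq \max\{\alpha_{I,N}(\Wb), \alpha_{I,N}(\Pb)\} = \alpha_{I,N}(\Wb)$. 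The equality uses Step 1 and the fact that every $\alpha$ is at least $-1$.
\item Rotate to the triangle $\Pb[1] \rt \Wb \rt \Xb \rt \Pb$. The same argument gives $\alpha_{I,N}(\Wb) \leq \alpha_{I,N}(\Xb)$.
\item Hence $\alpha_{I,N}(\Xb) = \alpha_{I,N}(\Wb)$. Likewise $\alpha_{I,N}(\Yb) = \alpha_{I,N}(\Wb)$, using the morphism $g$.
\end{enumerate}
The argument for $\beta_{I,V}$ is identical, using Proposition \ref{triangle}(6). For $\beta$ the functor is contravariant, but (6) is stated symmetrically in the outer terms, so this causes no difficulty.

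\emph{Step 3 (part (2)).} By the construction of the Verdier quotient, every distinguished triangle in $\D_{sg}$ is isomorphic in $\D_{sg}$ to the image of a distinguished triangle $\Xb' \rt \Yb' \rt \Zb' \rt \Xb'[-1]$ of $\K^{-,b}(\proj A)$. Concretely, this gives $\Xb' \cong \Xb$, $\Yb' \cong \Yb$ and $\Zb' \cong \Zb$ in $\D_{sg}$.
\begin{enumerate}
\item Proposition \ref{triangle}(3) gives $\alpha_{I,N}(\Yb') \leq \max\{\alpha_{I,N}(\Xb'), \alpha_{I,N}(\Zb')\}$.
\item Part (1) replaces each primed object by the unprimed one without changing $\alpha_{I,N}$.
\item The same argument with Proposition \ref{triangle}(6) gives the inequality for $\beta_{I,V}$.
\end{enumerate}

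\emph{Main obstacle.} The only delicate point is the bookkeeping in the Verdier quotient. One must check that roofs representing isomorphisms really have both legs with cones in $\thick A$, and that triangles in $\D_{sg}$ lift to $\K^{-,b}(\proj A)$. Both are standard facts about Verdier localization. One must also check that the $-1$ values coming from $\thick A$ never raise a maximum. This holds because every $\alpha$ and $\beta$ is at least $-1$, so a $-1$ term in a maximum is harmless.
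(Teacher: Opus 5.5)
Your proof is correct and follows essentially the paper's route. You represent the isomorphism by a roof whose legs have cones in $\thick A$, use that such cones are bounded free complexes and so contribute nothing in high homological degrees, and lift triangles from $\D_{sg}$ to $\K^{-,b}(\proj A)$ for part (2). The only cosmetic difference is in part (1): the paper reads off isomorphisms $H_i(\Zb \otimes N/I^{n+1}N) \cong H_i(\Xb \otimes N/I^{n+1}N)$ for $i \geq m$ directly from the long exact sequence of Proposition \ref{triangle}(1), whereas you obtain equality of the limsups by applying Proposition \ref{triangle}(3) to the triangle and to its rotation.
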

 \begin{proof}
 (1) Let $\xi \colon \Xb \rt \Yb$ be an isomorphism in $\D_{sg}$.
 Note  $\xi $ can be written as a left fraction
\[
\xymatrix{
\
&\Zb
\ar@{->}[dl]_{u}
\ar@{->}[dr]^{f}
 \\
\Xb
\ar@{->}[rr]_{\xi = fu^{-1}}
&\
&\Yb
}
\]
We note that $f$ is also an isomorphism in $\D_{sg}$. We have $\cone(u), \cone(f) \in \thick(A)$.

1(a) Say $\cone(u)$ is isomorphic to a bounded  complex $\Pb$ of free $A$-modules. Say $\Pb_j = 0$ for $j \geq m$. Then for any finitely generated $A$-module we have   $H_i(\cone(u)\otimes W) = 0$ for $i \geq m$. So  for all $n \geq 0$ we have $H_i(\cone(u) \otimes N/I^{n+1}N) = 0$ for $i \geq m$.
So by \ref{triangle}(1) we get $H_i(\Zb \otimes N/I^{n+1}N) \cong H_i(\Xb \otimes N/I^{n+1}N)$ for all $ i \geq m$ and for all $n \geq 0$.
It follows that $\alpha_{I, N}(\Zb) = \alpha_{I, N}(\Xb)$. Similarly we get $\alpha_{I, N}(\Zb) = \alpha_{I, N}(\Yb)$. The result follows.

1(b) This is similar to (a).

2 We note that $s$ is isomorphic to the image in $\D_{sg}$ of a triangle $t \colon  \Xb^\prime \rt \Yb^\prime \rt \Zb^\prime \rt \Xb^\prime[-1]$ in  $\K^{-,b}(\proj A)$,
see \cite[p.\ 97]{N}.

(a) This follows from \ref{triangle}(3) and 1(a) above.

(b) This follows from \ref{triangle}(6) and 1(b) above.
 \end{proof}

\section{Proof of Theorems \ref{main-tor} and \ref{main-ext}}
Throughout set $\D_{sg} = \D_{sg}(A)$.
We first give
\begin{proof}[Proof of Theorem \ref{main-tor}]
Let $M$ be a non-free MCM $A$-module. Let $\Pb$ be the minimal projective resolution of $M$. Then $\Pb \neq 0$ in $\D_{sg}$. Let $\Qb$ be the minimal free resolution of $k$.
As $A$ is dominant we get that $\Qb \in \thick(\Pb)$.
Set $r = \limsup_{j \rt \infty} r_{I, N}^j(M)$. We have $\alpha_{I, N}(\Pb) = r$.

We now recall the definition of thick closure of $\Pb$.
We note that $\alpha_{I, N}(\Xb[m]) = \alpha_{I, N}(\Xb) $ for all $m \in \Z$. Also $\alpha_{I, N}(\Xb) \leq \alpha_{I, N}(\Xb \oplus \Yb)$.
If $\Xb \rt \Yb \rt \Zb \rt \Xb[-1]$ is a triangle in $\D_{sg}$ with  $\alpha_{I, N}(\Xb) \leq r$ and $\alpha_{I, N}(\Zb) \leq r$ then by \ref{mom}(2)(a) we get $\alpha_{I, N}(\Yb) \leq r$.
So we get $\alpha_{I, N}(\Xb) \leq r$ for all $\Xb \in \thick(\Pb)$. As $A$ is dominant we get $\Qb \in \thick(\Pb)$. So  $\alpha_{I, N}(\Qb) \leq r$. By \ref{equi-residue}, we get $r_{I, N}^2(\Omega^d(k)) = \alpha_{I, N}(\Qb)$  But by \ref{bound} we get $r \leq r_{I, N}^2(\Omega^d(k)) $. It follows that $r =  r_{I, N}^2(\Omega^d(k))$.
\end{proof}

\s The proof of  Theorem \ref{main-ext}  is similar to the proof of Theorem \ref{main-tor} and so is omitted.

 \section*{Acknowledgements}
I thank Ryo Takahashi for some discussion on dominant rings.


\begin{thebibliography} {99}







\bibitem{BH}  W. Bruns and J. Herzog,
Cohen-Macaulay Rings, revised edition,
Cambridge Studies in Advanced Mathematics, 39.
Cambridge University Press, 1998.

\bibitem{Bu}
R.-O. Buchweitz,
\emph{Maximal {C}ohen-{M}acaulay modules and {T}ate cohomology over {G}orenstein rings},
 Mathematical Surveys and Monographs, 262. American Mathematical Society, Providence, RI, 2021.



\bibitem{KT}
D.~Katz and E.~Theodorescu,
\emph{On the degree of Hilbert polynomials associated to the torsion functor},
Proc. Amer. Math. Soc, 135, (2007),  3073--3082.

\bibitem{Ko}
V.~Kodiyalam,
\emph{Homological invariants of powers of an ideal},
Proc. Amer. Math. Soc. 118 (1993), no. 3, 757–764.

 \bibitem{N}
 A.~Neeman,
 \emph{Triangulated categories},
  Annals of Mathematics Studies, 148. Princeton University Press, Princeton, NJ, 2001.


\bibitem{P1}
T.~J.~Puthenpurakal,
 \emph{Hilbert-coefficients of a Cohen-Macaulay module},
J. Algebra 264 (2003), no. 1, 82--97.




\bibitem{P3}
\bysame,
\emph{A sub-functor for Ext and Cohen-Macaulay associated graded modules with bounded multiplicity},
Trans. Amer. Math. Soc. 373 (2020), no. 4, 2567-–2589.


 \bibitem{P4}
\bysame,
 \emph{Derived functors and Hilbert polynomials over hypersurface rings},
Proc. Edinb. Math. Soc. (2) 68 (2025), no. 3, 898--908.

\bibitem{P5}
\bysame,
 \emph{Ext functors, support varieties and Hilbert polynomials over complete intersection rings},
 Preprint,  arXiv:2502.16494 
 
\bibitem{T}
R.~Takahashi,
\emph{Classifying thick subcategories of the stable category of Cohen-Macaulay modules},
Adv. Math. 225 (2010), no. 4, 2076--2116.

 \bibitem{T2}
\bysame,
\emph{Dominant local rings and subcategory classification},
Int. Math. Res. Not. IMRN 2023, no. 9, 7259--7318.

\bibitem{Theo}
E.~Theodorescu,
\emph{Derived functors and Hilbert polynomials},
Math. Proc. Camb. Phil. Soc. 132 (2002), 75--88.






\end{thebibliography}
\end{document}